\newtheorem{theorem}{Theorem}
\newtheorem{claim}[theorem]{Lemma and Definition}
\newtheorem{conclusion}[theorem]{Corollary and Definition}
\newtheorem{definition}[theorem]{Definition}
\newtheorem{example}[theorem]{Example}
\newtheorem{lemma}[theorem]{Lemma}
\newtheorem{notation}[theorem]{Notation}
\newtheorem{proposition}[theorem]{Proposition}
\newtheorem{remark}[theorem]{Remark}
\newenvironment{proof}[1][Proof]{\noindent\textbf{#1.} }{\ \rule{0.5em}{0.5em}}
\newdimen\dummy
\begin{document}

\title{On the Closed Graph Theorem and the Open Mapping Theorem}
\author{Henri Bourl\`{e}s\thanks{%
Satie, ENS de Cachan/CNAM , 61 Avenue Pr\'{e}sident Wilson, F-94230 Cachan,
France. e-mail: henri.bourles@satie.ens-cachan.fr}}
\maketitle

\begin{abstract}
Let $E,F$ be two topological spaces and $u:E\rightarrow F$ be a map. \ If $F$
is Haudorff and $u$ is continuous, then its graph is closed. \ \ The Closed
Graph Theorem establishes the converse when $E$ and $F$ are suitable objects
of topological algebra, and more specifically topological groups,
topological vector spaces (TVS's) or locally vector spaces (LCS's) of a
special type. The Open Mapping Theorem, also called the Banach-Schauder
theorem, states that under suitable conditions on $E$ and $F,$ if $%
v:F\rightarrow E$ is a continuous linear surjective map, it is open. \ When
the Open Mapping Theorem holds true for $v,$ so does the Closed Graph
Theorem for $u.$ \ The converse is also valid in most cases, but there are
exceptions. \ This point is clarified. Some of the most important versions
of the Closed Graph Theorem and of the Open Mapping Theorem are stated
without proof but with the detailed reference.
\end{abstract}

\sloppy

\section{Introduction}

Let $E,F$ be two topological spaces and $u:E\rightarrow F$ be a map. \ If $F$
is Haudorff and $u$ is continuous, then its graph is closed (see Lemma \ref%
{lemma-continuous-then-closed} below). \ The Closed Graph Theorem
establishes the converse when $E$ and $F$ are suitable objects of
topological algebra, and more specifically topological groups, topological
vector spaces (TVS's) or locally vector spaces (LCS's) of a special type. \
Banach's theorem states that when $E$ and $F$ are Fr\'{e}chet spaces and $u$
is linear, this map is continuous if, and only if its graph is closed (\cite%
{Banach Operations lineaires}, p. 41, Thm. 7). \ The Open Mapping Theorem,
also called the Banach-Schauder theorem, states that under suitable
conditions on $E$ and $F,$ if $v:F\rightarrow E$ is a continuous linear
surjective map, it is open, i.e. for every open subset $\Omega $ of $F,$ $%
v\left( \Omega \right) $ is an open subset of $E$ (\cite{Schauder}, p. 6,
Satz 2). \ These two results are easy consequences of Banach's isomorphism
theorem (\cite{Banach-isomorphism}, p. 238, Thm. 7). \ All these results
still hold true when $E,F$ are metrizable complete TVS's over a non-discrete
valued division ring $\mathbf{k}$ \cite{Bourbaki-EVT}.

Several generalizations of the Closed Graph Theorem and the Open Mapping
Theorem have been established when $\mathbf{k}=%
\mathbb{R}
$ or $%
\mathbb{C}
$ and when $E,F$ are special kinds of LCS's over $\mathbf{k}$, or when $E,F$
are suitable topological groups. \ When the Open Mapping Theorem holds true
for $v,$ so does the Closed Graph Theorem for $u.$ \ The converse is also
valid in most cases, but there are exceptions, e.g. Pt\'{a}k's classical
result \cite{Ptak} which can be stated as follows: (i) Let $E$ be barrelled
and $F$ be infra-Pt\'{a}k (i.e. $B_{r}$-complete). \ A $\mathbf{k}$-linear
map $u:E\rightarrow F$ is continuous if its graph is closed. \ (ii) Let $F$
be Pt\'{a}k (i.e. $B$-complete) and $E$ be barrelled, then a $\mathbf{k}$%
-linear continuous surjection $v:F\rightarrow E$ is open. \ A Pt\'{a}k space
is infra-Pt\'{a}k (\cite{Ptak}, (4.2)), but Valdivia has shown in \cite%
{Valdivia-84} that there exist infra-Pt\'{a}k spaces which are not Pt\'{a}k.
\ If "Pt\'{a}k" is replaced by "infra-Pt\'{a}k" in (ii), this statement is
no longer correct. \ 

We review in this paper some of the most important versions of the Closed
Graph Theorem and of the Open Mapping Theorem. Their connection is expressed
in Theorem \ref{Main thm} below (Section \ref{sect-main-thm}). \ Some
preliminaries are given in Section \ref{Sect-preliminaires}. \ A typology of
topological spaces, topological groups and topological LCS's is given in
Section \ref{Sect-typology}. \ Versions of the Closed Graph Theorems and of
the Open Mapping Theorems in topological groups and in LCS's are stated in
Sections \ref{Sect-thm-top-groups} and \ref{sect-thm-LCS} respectively.

\section{Preliminaries\label{Sect-preliminaires}}

\subsection{Continuity and closed graph}

In this paper, we use nets \cite{Kelley} rather than filters \cite%
{Bourbaki-TG-I} in topological spaces. \ These two notions can be used in an
equivalent way for convergence studies, and they are connected as follows:\
denoting by $I$ a preordered set, with preorder relation $\succeq $, a net $%
\left( x_{i}\right) _{i\in I}$ in $E$ is a map $I\rightarrow E:i\mapsto
x_{i}.$ \ The family of sets $\mathfrak{B}=\left( B_{i}\right) _{i\in I}$
where $B_{i}=\left\{ x_{i}\in E;j\succeq i\right\} $ is a base of the
elementary filter associated with $\left( x_{i}\right) _{i\in I},$ and $%
\left( x_{i}\right) $ converges to $x\in E$ if, and only if $\lim \mathfrak{B%
}=x.$ \ Conversely, let $\mathfrak{B}$ be a filter base and, using the
choice axiom, for every $B\in \mathfrak{B}$ let $x_{B}\in B.$ \ Then $\left(
x_{B}\right) _{B\in \mathfrak{B}}$ is a net associated to $\mathfrak{B}$ and 
$\lim \mathfrak{B}=x$ if, and only if every net associated to $\mathfrak{B}$
converges to $x.$ \ A sequence is a net with a countable set of indices
which can be identified with $%
\mathbb{N}
.$

Let $E,F$ be two topological spaces and $u:E\rightarrow F$ be a map. \ The
map $u$ is continuous (resp. sequentially continuous) if, and only if for
any point $x\in E,$ whenever $\left( x_{i}\right) $ is a net (resp. a
sequence) in $E$ converging to $x,$ the net (resp. the sequence) $\left(
u\left( x_{i}\right) \right) $) converges to $u\left( x\right) .$ \ The
graph $\limfunc{Gr}\left( u\right) $ of $u$ is closed (resp. sequentially
closed) if, and only if whenever $\left( x_{i},u\left( x_{i}\right) \right) $
is a net (resp. a sequence) in $E\times F$ converging to $\left( x,y\right)
, $ necessarily $\left( x,y\right) \in \limfunc{Gr}\left( u\right) ,$ i.e. $%
y=u\left( x\right) .$ \ If $E$ is a topological group, the point $x$ here
above can be replaced by the neutral element $e.$

\begin{lemma}
Let $E,F$ be two topological spaces, $u:E\rightarrow F$ be a map, and $%
\limfunc{Gr}\left( u\right) $ be its graph. \ Then $u$ is continuous if, and
only if the map $p:\limfunc{Gr}\left( u\right) \rightarrow E:\left(
x,f\left( x\right) \right) \mapsto x$ is open.
\end{lemma}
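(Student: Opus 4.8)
The plan is to exploit the fact that the projection $p$ is, before any continuity hypothesis on $u$, already a continuous bijection, so that the entire content of the statement is the question of when its inverse is continuous. First I would note that $p$ is the restriction to $\limfunc{Gr}\left( u\right) $ of the canonical first projection $\pi _{1}:E\times F\rightarrow E$, and is therefore continuous for the subspace topology carried by $\limfunc{Gr}\left( u\right) $. It is a bijection because $\limfunc{Gr}\left( u\right) $ is a graph: injectivity and surjectivity both follow from the fact that for each $x\in E$ there is exactly one point of the graph lying over $x$, namely $\left( x,u\left( x\right) \right) $. Its inverse is the map $q:E\rightarrow \limfunc{Gr}\left( u\right) :x\mapsto \left( x,u\left( x\right) \right) $.

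The key observation is then that a continuous bijection is open if, and only if it is a homeomorphism, which in turn holds if, and only if its inverse $q=p^{-1}$ is continuous. Thus it suffices to show that $q$ is continuous exactly when $u$ is. To see this I would use the universal properties of the subspace and product topologies: since $\limfunc{Gr}\left( u\right) $ carries the topology induced from $E\times F$, the map $q$ is continuous if, and only if the composite $E\rightarrow E\times F:x\mapsto \left( x,u\left( x\right) \right) $ is continuous, and the latter is continuous if, and only if both of its components are. The first component is the identity of $E$, which is always continuous, so the only remaining requirement is that the second component, which is $u$ itself, be continuous. Chaining these equivalences gives $p$ open $\Leftrightarrow $ $q$ continuous $\Leftrightarrow $ $u$ continuous, as claimed.

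I do not expect a genuine obstacle here; the argument is a sequence of standard equivalences, and the only points needing a little care are the bijectivity of $p$ and the equivalence ``continuous bijection is open iff homeomorphism,'' both of which are elementary. Should one prefer to stay within the net formalism adopted in this paper, the same proof can be phrased directly: $p^{-1}=q$ is continuous precisely when, for every net $\left( x_{i}\right) $ converging to $x$ in $E$, the net $\left( x_{i},u\left( x_{i}\right) \right) $ converges to $\left( x,u\left( x\right) \right) $ in $\limfunc{Gr}\left( u\right) $; since convergence in the subspace is coordinatewise and the first coordinate converges automatically, this reduces to $u\left( x_{i}\right) \rightarrow u\left( x\right) $, which is exactly the net characterization of the continuity of $u$ recalled above.
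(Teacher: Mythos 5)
Your proof is correct and takes essentially the same approach as the paper's: both observe that $p$ is a bijection whose inverse is $x\mapsto \left( x,u\left( x\right) \right) $, so that openness of $p$ is equivalent to continuity of this inverse, which in the product (subspace) topology is equivalent to continuity of $u$. You merely spell out the universal-property and net-convergence details that the paper's two-line proof leaves implicit.
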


\begin{proof}
The map $p$ is bijective with inverse $p^{-1}:x\mapsto \left( x,f\left(
x\right) \right) .$ \ Thus $p^{-1}$ is continuous (i.e., $p$ is open) if,
and only if $f$ is continuous.
\end{proof}

The following result is a slight generalization of (\cite{Bourbaki-TG-I}, p.
I.53, Cor. 2 of Prop. 2):

\begin{lemma}
\label{lemma-continuous-then-closed}Let $E,F$ be two topological spaces and $%
u:E\rightarrow F$ be a map. \ If $F$ is Hausdorff and $u$ is continuous
(resp. sequentially continuous), then $\limfunc{Gr}\left( u\right) \ $is
closed (resp. sequentially closed) in $E\times F.$
\end{lemma}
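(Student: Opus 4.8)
The plan is to argue directly with nets (and separately with sequences for the parenthetical assertion), exactly as the preliminaries set up the characterizations of continuity and of a closed graph. To show $\limfunc{Gr}\left( u\right) $ is closed, I would start from an arbitrary net $\left( x_{i},u\left( x_{i}\right) \right) _{i\in I}$ in $\limfunc{Gr}\left( u\right) \subseteq E\times F$ that converges to some point $\left( x,y\right) \in E\times F$, and I would aim to prove that $y=u\left( x\right) $, so that $\left( x,y\right) \in \limfunc{Gr}\left( u\right) $.

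The key observation is that convergence in the product topology is coordinatewise: from $\left( x_{i},u\left( x_{i}\right) \right) \rightarrow \left( x,y\right) $ I would extract that $\left( x_{i}\right) $ converges to $x$ in $E$ and that $\left( u\left( x_{i}\right) \right) $ converges to $y$ in $F$. Next I would invoke the continuity of $u$ via the net characterization recalled just above the statement: since $x_{i}\rightarrow x$, the image net satisfies $u\left( x_{i}\right) \rightarrow u\left( x\right) $ in $F$. At this stage the net $\left( u\left( x_{i}\right) \right) $ is seen to converge simultaneously to $y$ and to $u\left( x\right) $.

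The final step is where the Hausdorff hypothesis on $F$ enters: in a Hausdorff space the limit of a net is unique, so the two limits must coincide, giving $y=u\left( x\right) $ and hence $\left( x,y\right) \in \limfunc{Gr}\left( u\right) $. This establishes closedness. For the sequential version I would run the identical argument verbatim, replacing the net $\left( x_{i}\right) $ by a sequence and replacing \emph{continuous} by \emph{sequentially continuous} and \emph{closed} by \emph{sequentially closed}; the product-convergence and uniqueness-of-limits facts hold equally for sequences in a Hausdorff space.

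Honestly, I do not expect a genuine obstacle here: the argument is a routine application of coordinatewise convergence in a product together with uniqueness of limits in a Hausdorff space. The only point requiring the slightest care—and presumably the reason this is billed as a \emph{slight generalization} of the cited Bourbaki corollary—is that everything is phrased for general topological spaces and maps (not merely for, say, linear maps between TVS's or morphisms of topological groups), so I would make sure the net/sequence characterizations are applied without any superfluous algebraic or metric assumptions, relying solely on continuity of $u$ and separation of $F$.
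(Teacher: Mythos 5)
Your proposal is correct and follows essentially the same route as the paper's own proof: a net (resp.\ sequence) in $\limfunc{Gr}\left( u\right)$ converging to $\left( x,y\right)$ gives $\left( x_{i}\right) \rightarrow x$ coordinatewise, continuity yields $\left( u\left( x_{i}\right) \right) \rightarrow u\left( x\right)$, and uniqueness of limits in the Hausdorff space $F$ forces $y=u\left( x\right)$. No differences worth noting; the sequential case is handled verbatim in both.
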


\begin{proof}
Let $\left( \left( x_{i},u\left( x_{i}\right) \right) \right) _{i\in I}$ be
a net (resp. a sequence) in $\limfunc{Gr}\left( u\right) $ converging to $%
\left( x,y\right) \in E\times F.$ \ Then, $\left( x_{i}\right) \rightarrow
x. $ \ If $u$ is continuous (resp. sequentially continuous), this implies $%
\left( u\left( x_{i}\right) \right) \rightarrow u\left( x\right) .$ \ If $F$
is Hausdorff, $u\left( x\right) =y,$ thus $\left( x,y\right) \in \limfunc{Gr}%
\left( u\right) $ and $\limfunc{Gr}\left( u\right) $ is closed (resp.
sequentially closed).
\end{proof}

The Closed Graph Theorem is the converse of Lemma \ref%
{lemma-continuous-then-closed} when $E,$ $F$ and $u$\ satisfy the suitable
conditions.

Our first example will be:

\begin{lemma}
\label{lemma-closed-graph-F-compact}Let $E,F$ be two topological spaces, $%
u:E\rightarrow F$ be a map, and assume that $F$ is (Hausdorff and) compact.
\ Then $u$ is continuous if, and only if $\limfunc{Gr}\left( u\right) $ is
closed.
\end{lemma}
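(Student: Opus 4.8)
The plan is to prove the nontrivial direction: assuming $\mathrm{Gr}(u)$ is closed and $F$ is compact Hausdorff, deduce that $u$ is continuous. The converse direction is already supplied by Lemma~\ref{lemma-continuous-then-closed}, since a compact space is assumed Hausdorff. The key structural idea is to exploit compactness of $F$ to force convergence of the image net, and then use closedness of the graph to pin down the limit.

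Let me sketch my analysis below.

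**Proof proposal.**
First I would recall the net characterization of continuity from the preliminaries: it suffices to show that whenever $(x_i)_{i\in I}$ is a net in $E$ converging to some $x$, the net $(u(x_i))$ converges to $u(x)$. The difficulty is that, unlike in the case where $u$ is already known continuous, I cannot directly assert convergence of $(u(x_i))$; that is precisely what must be established, and this is the main obstacle. Here is where compactness enters: since $F$ is compact, the net $(u(x_i))$ need not itself converge, but it admits a convergent subnet. So the strategy is to argue by a subnet/cluster-point argument rather than by direct convergence.

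The plan in detail is as follows. Fix a net $(x_i)_{i\in I}$ in $E$ with $x_i \to x$. I claim $u(x_i) \to u(x)$. Suppose, for contradiction, that this fails. Then there is an open neighborhood $V$ of $u(x)$ in $F$ such that $(u(x_i))$ is frequently outside $V$; restricting to the corresponding cofinal subset of indices, I obtain a subnet $(u(x_j))_{j\in J}$ with $u(x_j)\notin V$ for all $j$. By compactness of $F$, this subnet has a convergent subnet $(u(x_k))_{k\in K}$ with limit $y\in F$; since $F$ is Hausdorff and each $u(x_k)\notin V$, the limit $y$ lies in the closed set $F\setminus V$, so in particular $y\neq u(x)$. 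Now consider the corresponding net of pairs $\bigl(x_k, u(x_k)\bigr)$ in $\mathrm{Gr}(u)$. The first coordinates still converge to $x$ (as a subnet of a subnet of $(x_i)$), and the second coordinates converge to $y$, so in the product topology $\bigl(x_k, u(x_k)\bigr) \to (x,y)$. Because $\mathrm{Gr}(u)$ is closed, the limit $(x,y)$ lies in $\mathrm{Gr}(u)$, which forces $y = u(x)$ --- contradicting $y \neq u(x)$. Hence $u(x_i)\to u(x)$, and $u$ is continuous.

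The step I expect to be the main obstacle is the passage to subnets and the careful bookkeeping ensuring that the first coordinate still converges to $x$ after two successive subnet extractions; this is routine in net theory but must be stated cleanly, using the fact that any subnet of a convergent net converges to the same limit. A cleaner packaging avoids the explicit contradiction: one shows directly that $u(x)$ is the unique cluster point of $(u(x_i))$ in the compact Hausdorff space $F$ (uniqueness coming from the closed graph and Hausdorffness), and then invokes the standard fact that a net in a compact space with a unique cluster point converges to that cluster point. Either formulation reduces the proof to two standard facts about nets in compact Hausdorff spaces, so the essential content is genuinely the interplay between compactness (existence of cluster points) and the closed graph (uniqueness of the cluster point).
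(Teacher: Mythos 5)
Your proof is correct and takes essentially the same route as the paper: compactness of $F$ supplies a cluster point of $\left( u\left( x_{i}\right) \right) $, and closedness of $\mathrm{Gr}\left( u\right) $ forces every such cluster point to equal $u\left( x\right) $. In fact the ``cleaner packaging'' you mention at the end --- $u\left( x\right) $ is the unique cluster point of a net in a compact space, hence its limit --- is precisely the paper's argument, and your subnet-and-contradiction version is just an unfolding of it.
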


\begin{proof}
(If): Let $\left( x_{i}\right) $ be a net converging to $x$ in $E.$ \ Since $%
F$ is compact, $\left( u\left( x_{i}\right) \right) $ has a cluster point in 
$F,$ thus $\left( \left( x_{i},u\left( x_{i}\right) \right) \right) $ has a
cluster point $\left( x,y\right) $ in $E\times F.$ \ Since $\limfunc{Gr}%
\left( u\right) $ is closed, $\left( x,y\right) \in \limfunc{Gr}\left(
u\right) ,$ i.e. $y=u\left( x\right) .$ \ Therefore $u\left( x\right) $ is
the unique cluster point of $\left( u\left( x_{i}\right) \right) $ and $%
\left( u\left( x_{i}\right) \right) \rightarrow u\left( x\right) ,$ which
implies that $u$ is continuous.
\end{proof}

For the second example we will give, the following is necessary:

\begin{definition}
A subset $A$ of a topological space $X$ is called \emph{nearly open} if $%
A\subset \overset{o}{\bar{A}},$ i.e. $A$ belongs to the interior of its
closure. \ Let $u:X\rightarrow Y$ where $X$ and $Y$ are topological spaces.
\ The map $u$ is called \emph{nearly open} (resp. \emph{nearly continuous})
if for any open set $\Omega $ in $X$ (resp. $Y$), $u\left( \Omega \right) $
(resp. $u^{-1}\left( \Omega \right) $) is nearly open. \ (In the above, the
term "almost" is sometimes used in place of "nearly".)
\end{definition}

The following result, the proof of which is less trivial than that of Lemma %
\ref{lemma-closed-graph-F-compact}, is due to Weston and Pettis \cite{Weston}%
, \cite{Pettis}:

\begin{theorem}
Let $X,Y$ be two metrizable complete topological spaces. Let $u:X\rightarrow
Y$. \ The following conditions are equivalent:\newline
(a) $u$ is continuous.\newline
(b) $\limfunc{Gr}\left( u\right) $ is closed and $u$ is nearly continuous.
\end{theorem}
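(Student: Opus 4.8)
The plan is to prove the two implications separately, the content lying entirely in $(b)\Rightarrow(a)$.

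For $(a)\Rightarrow(b)$ I would argue directly. Since $Y$ is metrizable it is Hausdorff, so continuity of $u$ forces $\limfunc{Gr}\left( u\right) $ to be closed by Lemma \ref{lemma-continuous-then-closed}. Moreover continuity is formally stronger than near continuity: if $\Omega \subseteq Y$ is open then $u^{-1}(\Omega )$ is open, and every open set $A$ satisfies $A=\overset{o}{A}\subseteq \overset{o}{\bar{A}}$, so $u^{-1}(\Omega )$ is nearly open. Hence $(b)$ holds.

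For $(b)\Rightarrow (a)$, I fix $x_{0}\in X$, put $y_{0}=u(x_{0})$, and work with fixed complete metrics $d_{X},d_{Y}$; it suffices to prove continuity at $x_{0}$. The first observation is that near continuity converts into a density statement: for every $r>0$ the set $u^{-1}(B_{Y}(y_{0},r))$ contains $x_{0}$ and is nearly open, so its closure is a neighbourhood of $x_{0}$, i.e. $B_{X}(x_{0},\delta (r))\subseteq \overline{u^{-1}(B_{Y}(y_{0},r))}$ for some $\delta (r)>0$; the same holds with $y_{0}$ replaced by $u(z)$ and $x_{0}$ by any point $z$. Given $\epsilon >0$ I would fix a summable sequence $\epsilon _{1},\epsilon _{2},\dots $ with $\sum_{k}\epsilon _{k}<\epsilon $, take $\delta =\delta (\epsilon _{1})$, and for an arbitrary $x\in B_{X}(x_{0},\delta )$ build inductively points $z_{1},z_{2},\dots $ with $z_{k}\rightarrow x$ in $X$ while $d_{Y}(u(z_{1}),y_{0})<\epsilon _{1}$ and $d_{Y}(u(z_{k+1}),u(z_{k}))<\epsilon _{k+1}$. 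At each stage $z_{k+1}$ is chosen inside $u^{-1}(B_{Y}(u(z_{k}),\epsilon _{k+1}))$ and close to $x$, which is possible because the density statement places $x$ in the closure of that preimage.

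Completeness and the closed graph then finish the argument: the $Y$-estimates make $\left( u(z_{k})\right) $ a Cauchy sequence, hence convergent to some $y\in Y$ by completeness of $Y$; since $z_{k}\rightarrow x$ and $\limfunc{Gr}\left( u\right) $ is closed, $(x,y)\in \limfunc{Gr}\left( u\right) $, that is $y=u(x)$; and telescoping gives $d_{Y}(u(x),y_{0})\leq \sum_{k}\epsilon _{k}<\epsilon $. Thus $u(B_{X}(x_{0},\delta ))\subseteq B_{Y}(y_{0},\epsilon )$, which is continuity at $x_{0}$. An equivalent and perhaps cleaner packaging, using the first lemma of this section, is to reduce to showing that the projection $p:\limfunc{Gr}\left( u\right) \rightarrow X$ is open: near continuity of $u$ makes $p$ nearly open (an intersection of an open set with a nearly open set is again nearly open), $\limfunc{Gr}\left( u\right) $ is complete as a closed subspace of $X\times Y$, and one runs the same successive approximation there.

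The step I expect to be delicate is the coordination of the two approximations inside the induction. Near continuity supplies only density, and the radius controlling how well $u^{-1}(B_{Y}(u(z_{k}),\epsilon _{k+1}))$ approximates points near $z_{k}$ depends on $z_{k}$ itself; one must therefore interleave the choice of the $X$-errors $d_{X}(z_{k},x)$ with these radii so that the sequence simultaneously converges to the prescribed point $x$ and keeps its images Cauchy. It is precisely here that completeness of $Y$ (to manufacture the limit value $y$) and closedness of the graph (to certify $y=u(x)$) are indispensable, since they are what upgrade the mere density furnished by near continuity to genuine attainment of the value.
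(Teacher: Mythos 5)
Your direction (a)$\Rightarrow$(b) is fine, and the paper itself states this theorem without proof (it only cites Weston and Pettis), so your attempt must stand on its own; judged so, the core induction in (b)$\Rightarrow$(a) has a genuine gap. Near continuity at $z_{k}$ only tells you that $\overline{u^{-1}(B_{Y}(u(z_{k}),\epsilon _{k+1}))}$ is a neighborhood of $z_{k}$ \emph{itself}, of some radius $\delta (z_{k},\epsilon _{k+1})>0$ that is determined only after $z_{k}$ has been fixed. Your justification ``the density statement places $x$ in the closure of that preimage'' is precisely what fails: it requires $d_{X}(x,z_{k})<\delta (z_{k},\epsilon _{k+1})$, a self-referential condition --- a bound on $z_{k}$ whose right-hand side is a function of $z_{k}$ --- and no interleaving of the choices can enforce it, since among the admissible points $z$ arbitrarily close to $x$ the radii $\delta (z,\epsilon _{k+1})$ may shrink faster than $d_{X}(z,x)$. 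You flag this delicacy yourself, but the proposed remedy is not an argument. The ``cleaner packaging'' via the projection $p:\limfunc{Gr}\left( u\right) \rightarrow X$ suffers from the same defect: $p$ is indeed nearly open, but \emph{pointwise} near-openness with point-dependent moduli cannot feed the Banach--Schauder successive approximation, which needs uniform moduli --- available in topological groups by translation invariance, and absent in a bare metric space. So the scheme ``build $z_{k}\rightarrow x$ with $(u(z_{k}))$ Cauchy'' cannot be carried out as described.

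The proofs of Weston and Pettis evade this by \emph{not} forcing the approximating sequence to converge to the prescribed point $x$. Fix $x$ in the neighborhood $\mathrm{int}\,\overline{u^{-1}(B_{Y}(y_{0},\epsilon _{1}))}$ of $x_{0}$ and build two interleaved sequences: pick $z_{1}\in u^{-1}(B_{Y}(y_{0},\epsilon _{1}))\cap \mathrm{int}\,\overline{u^{-1}(B_{Y}(u(x),s_{1}))}$ (the second set is a neighborhood of $x$ because near continuity is invoked at $x$ with the ball centered at $u(x)$ --- legitimate even though $u(x)$ is unknown), then $w_{1}\in u^{-1}(B_{Y}(u(x),s_{1}))\cap \mathrm{int}\,\overline{u^{-1}(B_{Y}(u(z_{1}),\epsilon _{2}))}$ near $z_{1}$, then $z_{2}\in u^{-1}(B_{Y}(u(z_{1}),\epsilon _{2}))\cap \mathrm{int}\,\overline{u^{-1}(B_{Y}(u(w_{1}),s_{2}))}$ near $w_{1}$, and so on; near continuity is only ever applied at the \emph{current} point to a ball centered at its \emph{own} relevant value, so the self-reference disappears. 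With successive $X$-distances $<2^{-k}$ the merged sequence is Cauchy and converges (completeness of $X$) to an auxiliary point $\xi $ near $x$ --- not to $x$; the images $u(z_{k})$ are Cauchy with limit $q$, $d_{Y}(q,y_{0})\leq \sum_{k}\epsilon _{k}$, and the images $u(w_{k})$ are Cauchy with limit $q^{\prime }$, $d_{Y}(q^{\prime },u(x))\leq \sum_{k}s_{k}=\sigma $. Now the closed graph is used \emph{twice}, at $(\xi ,q)$ and at $(\xi ,q^{\prime })$, giving $q=u(\xi )=q^{\prime }$ and hence $d_{Y}(u(x),y_{0})\leq \sigma +\epsilon $ uniformly on a fixed neighborhood of $x_{0}$, which is continuity at $x_{0}$. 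The missing idea in your proposal is exactly this second, value-tracking sequence together with the double application of the closed-graph hypothesis at an uncontrolled limit point $\xi $; a single sequence aimed at $x$, with one application of the closed graph, cannot be extracted from near continuity alone.
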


\subsection{Inductive limits and locally convex hulls}

\begin{notation}
\label{not-Tcat}$\mathbf{TCat}$ is a category used in topological algebra
and is a subcategory of $\mathbf{TSet}$, the category of topological spaces (%
\cite{Bourbaki-TG-I}, Chap. II); there exists a forgetful functor $\mathfrak{%
Forget}:\mathbf{TCat}\rightarrow \mathbf{Cat}$ where $\mathbf{Cat}$ is a
subcategory of $\mathbf{Set}$, the category of sets. \ The three cases
considered below are: \newline
(1) $\mathbf{TGrp}\overset{\mathfrak{Forget}}{\rightarrow }\mathbf{Grp}$
where $\mathbf{TGrp}$ is the category of topological groups, and $\mathbf{Grp%
}$ is the category of groups; \newline
(2) $\mathbf{TVsp}_{\mathbf{k}}\overset{\mathfrak{Forget}}{\longrightarrow }%
\mathbf{Vsp}_{\mathbf{k}}$ where $\mathbf{TVsp}_{\mathbf{k}}$ the category
of right topological spaces over a topological division ring $\mathbf{k}$
and $\mathbf{Vsp}_{\mathbf{k}}$ is the category of right vector spaces over
the division ring $\mathbf{k})$;\newline
(3) $\mathbf{LCS}\overset{\mathfrak{Forget}}{\rightarrow }\mathbf{Vsp}_{%
\mathbf{k}}$ where $\mathbf{LCS}$ the category of locally convex spaces over 
$\mathbf{k}=%
\mathbb{R}
$ or $%
\mathbb{C}
$;\newline
For short, a morphism of $\mathbf{Cat}$ (resp. $\mathbf{TCat}$) is called a 
\emph{map} (resp. a \emph{morphism}).
\end{notation}

Let us recall the general notions of inductive (or direct) limit and of
filtrant inductive limit (\cite{Kashiwra-Schapira}, Chap 2 \& 3).

Let $\mathcal{C}$ be a category and $\left( F_{\alpha }\right) _{\alpha \in
A}$ be a family of objects of $\mathcal{C}.$ \ An inductive limit $%
\lim\limits_{\longrightarrow }F_{\alpha }$ is (if it exists) a functor $%
\mathcal{C}\rightarrow \mathcal{C}$, uniquely determined up to isomorphism,
consisting of one object and a family of morphisms $\varphi _{\alpha
}:F_{\alpha }\rightarrow \lim\limits_{\longrightarrow }F_{\alpha }$ such
that for any object $X$ and morphisms $f_{\alpha }:F_{\alpha }\rightarrow X,$
there exists a morphism $f:\lim\limits_{\longrightarrow }F_{\alpha
}\rightarrow X$ such that $f_{\alpha }$ factors through $f$ according to $%
f_{\alpha }=f\circ \varphi _{\alpha }.$

Let $A$ be equipped with a binary relation $\preccurlyeq .$ The latter is a 
\emph{preorder relation} if (and only if) it is transitive and reflexive. \
The preordered set $A$ is called \emph{right filtrant }if each pair of
elements has an upper bound. \ Let $\left( F_{\alpha }\right) _{\alpha \in
A} $ be a family of objects of $\mathcal{C}$ indexed by the right-filtrant
set $A$ and, for each pair $\left( \alpha ,\beta \right) \in A\times A$ such
that $\alpha \preccurlyeq \beta ,$ let $\varphi _{\beta }^{\alpha
}:F_{\alpha }\rightarrow F_{\beta }$ be a morphism such that $\varphi
_{\alpha }^{\alpha }=id_{F_{\alpha }}$ and $\varphi _{\gamma }^{\alpha
}=\varphi _{\gamma }^{\beta }\circ \varphi _{\beta }^{\alpha }$ for all $%
\alpha ,\beta ,\gamma \in A$ such that $\alpha \preccurlyeq \beta
\preccurlyeq \gamma .$ \ Then $\left( F_{\alpha },\varphi _{\beta }^{\alpha
}\right) $ is called a \emph{direct system} with index set $A.$ \ The
inductive limit $G=\lim\limits_{\longrightarrow }G_{\alpha }$ (if it exists)
is called \emph{filtrant} if the compatibility relation $\varphi _{\alpha
}=\varphi _{\beta }\circ \varphi _{\beta }^{\alpha }$ holds whenever $\alpha
\preceq \beta .$

\subsubsection{Inductive limits of topological groups}

Let $\mathbf{TCat}=\mathbf{TGr}$. \ A topology on a group $G$ which turns $G$
into a topological group will be called called a group-topology.

Let $A$ be a set, $G$ be a topological group, and $\left( G_{\alpha
},\varphi _{\alpha }\right) _{\alpha \in A}$ be a family of pairs of
topological groups and morphisms $G_{\alpha }\rightarrow G$ such that $%
G=\tbigcup\nolimits_{\alpha \in A}\varphi _{\alpha }\left( G_{\alpha
}\right) .$\ \ Then $G$ is the inductive limit of the family $\left(
G_{\alpha }\right) $\ relative to the group-maps $\varphi _{\alpha }$\ in $%
\mathbf{Grp}$ (written $G=\lim\limits_{\longrightarrow }G_{\alpha }$ $\left[
\varphi _{\alpha },\mathbf{Grp}\right] $). \ Let $\mathfrak{T}_{\alpha }$ be
the topology of $G_{\alpha }.$ \ The final group-topology $\mathfrak{T}$\ of
the family $\left( \mathfrak{T}_{\alpha }\right) $ relative to the
group-maps $\varphi _{\alpha }$ is defined to be the finest \emph{%
group-topology} which makes all maps $\varphi _{\alpha }$ continuous. \ This
group topology exists, i.e. inductive limits exist in $\mathbf{TGr}$. \
Nevertheless, this intricate group-topology is not Hausdorff in general,
even if the topologies of the topological groups $G_{\alpha }$ are Hausdorff
(\cite{Eilenberg-Steenrod}, p. 132) and Abelian \cite{Baker}.

\begin{claim}
The following conditions are equivalent:\newline
(i) $G$ is endowed with the above final group-topology $\mathfrak{T}$ (the
topological group obtained in this way is written $G\left[ \mathfrak{T}%
\right] ,$ or $G$ when there is no confusion).\newline
(ii) $G\left[ \mathfrak{T}\right] $ is the \emph{inductive limit} of the
family $\left( G_{\alpha }\right) $\ relative to the group-maps $\varphi
_{\alpha }$\ in $\mathbf{TGrp}$ (written $G\left[ \mathfrak{T}\right]
=\lim\limits_{\longrightarrow }G_{\alpha }$ $\left[ \varphi _{\alpha },%
\mathbf{TGrp}\right] $), i.e. a map $\rho :G\rightarrow H,$ where $H$ is any
topological group, is a morphism in $\mathbf{TGr}$ if, and only if for each $%
\alpha \in A,$ $\rho \circ \varphi _{\alpha }:G_{\alpha }\rightarrow H$ is a
morphism.
\end{claim}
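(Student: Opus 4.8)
The plan is to separate the algebraic and the topological content of the universal property. Since by hypothesis $G=\lim\limits_{\longrightarrow}G_{\alpha}\,[\varphi_{\alpha},\mathbf{Grp}]$, the group-map universal property already guarantees that a map $\rho:G\rightarrow H$ is a group homomorphism if, and only if each $\rho\circ\varphi_{\alpha}$ is one; so throughout I only need to track continuity, and ``morphism'' may be read as ``continuous homomorphism''. I would prove (i)$\Rightarrow$(ii) and (ii)$\Rightarrow$(i), and in both directions the decisive tool is the comparison of group-topologies via continuity of the identity map: if $\mathfrak{T}_{1},\mathfrak{T}_{2}$ are group-topologies on $G$, then $\mathrm{id}:G[\mathfrak{T}_{1}]\rightarrow G[\mathfrak{T}_{2}]$ is a morphism exactly when $\mathfrak{T}_{1}$ is finer than $\mathfrak{T}_{2}$.

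Before the two implications I would record the single construction that does the real work. Given a group homomorphism $\rho:G\rightarrow H$ into a topological group $H$, let $\mathfrak{V}$ be a fundamental system of neighbourhoods of the neutral element $e_{H}$; then the family $\left(\rho^{-1}(V)\right)_{V\in\mathfrak{V}}$ satisfies Bourbaki's neighbourhood axioms for the neutral element of a group-topology, precisely because $\rho$ is a homomorphism and the $V$ satisfy these axioms in $H$. This defines the coarsest group-topology $\mathfrak{T}_{\rho}$ on $G$ for which $\rho$ is continuous, and moreover $\rho\circ\varphi_{\alpha}$ is continuous for every $\alpha$ if, and only if $\varphi_{\alpha}:G_{\alpha}\rightarrow G[\mathfrak{T}_{\rho}]$ is continuous, since $(\rho\circ\varphi_{\alpha})^{-1}(V)=\varphi_{\alpha}^{-1}(\rho^{-1}(V))$ and the $\rho^{-1}(V)$ form a base at $e_{G}$ for $\mathfrak{T}_{\rho}$.

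For (i)$\Rightarrow$(ii), assume $G$ carries the final group-topology $\mathfrak{T}$. The implication ``$\rho$ a morphism $\Rightarrow$ each $\rho\circ\varphi_{\alpha}$ a morphism'' is immediate, since every $\varphi_{\alpha}$ is continuous by the very definition of $\mathfrak{T}$. Conversely, if each $\rho\circ\varphi_{\alpha}$ is a morphism, then by the construction above every $\varphi_{\alpha}$ is continuous into $G[\mathfrak{T}_{\rho}]$, so $\mathfrak{T}_{\rho}$ is a group-topology making all $\varphi_{\alpha}$ continuous; since $\mathfrak{T}$ is the finest such topology, $\mathfrak{T}$ is finer than $\mathfrak{T}_{\rho}$, whence $\rho:G[\mathfrak{T}]\rightarrow G[\mathfrak{T}_{\rho}]\rightarrow H$ is continuous, i.e.\ a morphism.

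For (ii)$\Rightarrow$(i), I would exploit the universal property twice with the identity map. Taking $H=G[\mathfrak{T}]$ and $\rho=\mathrm{id}$ (a morphism) forces each $\varphi_{\alpha}=\mathrm{id}\circ\varphi_{\alpha}$ to be a morphism, so $\mathfrak{T}$ makes all the $\varphi_{\alpha}$ continuous. To see it is the finest such, let $\mathfrak{T}'$ be any group-topology rendering every $\varphi_{\alpha}$ continuous; then each $\varphi_{\alpha}:G_{\alpha}\rightarrow G[\mathfrak{T}']$ is a morphism, so by (ii) $\mathrm{id}:G[\mathfrak{T}]\rightarrow G[\mathfrak{T}']$ is a morphism, i.e.\ $\mathfrak{T}$ is finer than $\mathfrak{T}'$. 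Hence $\mathfrak{T}$ is the final group-topology, which is (i). The one point demanding care, and the genuine obstacle, is that one must stay inside the category of group-topologies: the finest group-topology making the $\varphi_{\alpha}$ continuous is in general strictly coarser than the finest space-topology with this property, so the naive final-topology argument in $\mathbf{TSet}$ is unavailable, and the realization of $\mathfrak{T}_{\rho}$ as an honest group-topology is exactly what repairs this.
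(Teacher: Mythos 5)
Your proof is correct, and the first thing to note is that the paper itself states this claim (a ``Lemma and Definition'') \emph{without any proof}, so there is no in-paper argument to compare against; your argument via the pullback group-topology $\mathfrak{T}_{\rho}$ is the standard one and supplies exactly what the paper leaves implicit. The construction is sound: for a group homomorphism $\rho$ the sets $\rho^{-1}(V)$ do satisfy Bourbaki's neighbourhood axioms at the identity (the conjugation axiom holds because $a\rho^{-1}(V)a^{-1}=\rho^{-1}\bigl(\rho(a)V\rho(a)^{-1}\bigr)$ and inner automorphisms of $H$ are homeomorphisms); the equivalence ``each $\rho\circ\varphi_{\alpha}$ continuous $\Leftrightarrow$ each $\varphi_{\alpha}$ continuous into $G[\mathfrak{T}_{\rho}]$'' is right, since continuity at the identity suffices for homomorphisms of topological groups; and the double application of the universal property with $\rho=\mathrm{id}$ in (ii)$\Rightarrow$(i) is the usual uniqueness-of-final-objects argument. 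Your closing remark correctly isolates the genuine subtlety, which is also the point of the paper's observation that this final group-topology is ``intricate'': the finest group-topology making the $\varphi_{\alpha}$ continuous is in general strictly coarser than the final topology in $\mathbf{TSet}$, so one must verify that the comparison topology $\mathfrak{T}_{\rho}$ is an honest group-topology, which is precisely what your construction does.

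One caveat: your opening sentence overstates the algebraic situation. The paper's hypothesis is only that $G=\bigcup_{\alpha}\varphi_{\alpha}(G_{\alpha})$ (this is what the paper \emph{calls} the inductive limit in $\mathbf{Grp}$), and for a mere union of images it is false that a set map $\rho$ with every $\rho\circ\varphi_{\alpha}$ a homomorphism must itself be a homomorphism: take $G$ the Klein four-group, the three $\varphi_{\alpha}$ the inclusions of its three subgroups of order $2$, and $\rho$ the map sending the identity to $0$ and the three involutions to $1\in\mathbb{Z}/2$; each restriction is a homomorphism, but $\rho$ is not. This does not damage your proof, because by the paper's Notation \ref{not-Tcat} a ``map'' $\rho:G\rightarrow H$ is by definition a morphism of $\mathbf{Grp}$, i.e.\ already a group homomorphism, which is all that your construction of $\mathfrak{T}_{\rho}$ and the rest of the argument use; but you should drop the appeal to a universal property in $\mathbf{Grp}$ rather than rely on it, and simply invoke the standing convention that $\rho$ is a group homomorphism.
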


Since the groups $\varphi _{\alpha }\left( G_{\alpha }\right) $ and $%
G_{\alpha }/\ker \varphi _{\alpha }$ are canonically isomorphic, they are
identified; then the group-map $\varphi _{\alpha }$ and the canonical
surjection $\pi _{\alpha }:G_{\alpha }\twoheadrightarrow G_{\alpha }/\ker
\varphi _{\alpha }$\ are identified too. \ The final group-topology $%
\mathfrak{T}_{\alpha }^{q}$ on $G_{\alpha }/\ker \varphi _{\alpha }$
relative to $\pi _{\alpha }$ (resp. the final group-topology $\mathfrak{T}%
_{\alpha }^{f}$ on $\varphi _{\alpha }\left( G_{\alpha }\right) $ relative
to $\varphi _{\alpha }$) is the finest group-topology on $G_{\alpha }/\ker
\varphi _{\alpha }$ (resp. $\varphi _{\alpha }\left( G_{\alpha }\right) $)
which makes $\pi _{\alpha }$ (resp. $\varphi _{\alpha }$) continuous. \
Obviously, $\mathfrak{T}_{\alpha }^{q}$ and $\mathfrak{T}_{\alpha }^{f}$
coincide and $G=\lim\limits_{\longrightarrow }\varphi _{\alpha }\left(
G_{\alpha }\right) $ $\left[ j_{\alpha },\mathbf{TGr}\right] $ where $%
j_{\alpha }:\varphi _{\alpha }\left( G_{\alpha }\right) \hookrightarrow G$
is the inclusion.

\begin{conclusion}
(i) Let $A$ be right-filtrant and let $\left( G_{\alpha },\varphi _{\beta
}^{\alpha }\right) $ be a direct system in $\mathbf{TGr}.$ \ Let $%
G=\lim\limits_{\longrightarrow }G_{\alpha }$ $\left[ \varphi _{\alpha },%
\mathbf{TGr}\right] $ be the filtrant inductive limit of this system and $%
\alpha \preceq \beta $. \ The compatibility relation $\varphi _{\alpha
}=\varphi _{\beta }\circ \varphi _{\beta }^{\alpha }$ implies $\varphi
_{\alpha }\left( G_{\alpha }\right) \subset \varphi _{\beta }\left( G_{\beta
}\right) $. \ The group-map $j_{\beta }^{\alpha }:\varphi _{\alpha }\left(
G_{\alpha }\right) \rightarrow \varphi _{\beta }\left( G_{\beta }\right) $
induced by $\varphi _{\beta }^{\alpha }$ (for $\alpha \preceq \beta $) is
injective and continuous and is called the canonical monomorphism. \
Therefore the topological group $G$ is the filtrant inductive limit of the
direct system $\left( \varphi _{\alpha }\left( G_{\alpha }\right) ,j_{\beta
}^{\alpha }\right) .$\newline
(ii) An inductive limit of topological groups is called \emph{countable} if
the set of indices $A$ is countable.\newline
(iii) An inductive limit of topological groups is called \emph{strict} if it
is filtrant, $\varphi _{\beta }^{\alpha }$ is the inclusion $G_{\alpha
}\hookrightarrow G_{\beta },$ $\varphi _{\alpha }$ is the inclusion $%
G_{\alpha }\hookrightarrow G,$ and the topology of $\mathfrak{T}_{\alpha }$
and that induced in $G_{\alpha }$ by $\mathfrak{T}_{\beta }$ coincide
whenever $\alpha \preceq \beta .$
\end{conclusion}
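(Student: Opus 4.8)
The plan is to regard part (i) as the only assertion requiring proof, since (ii) and (iii) are purely terminological definitions. I would organize the argument around the three sub-claims of (i): the inclusion $\varphi_{\alpha}\left( G_{\alpha}\right) \subset \varphi_{\beta}\left( G_{\beta}\right) $, the injectivity and continuity of the induced map $j_{\beta}^{\alpha}$, and the identification of $G$ as the filtrant inductive limit of the new system $\left( \varphi_{\alpha}\left( G_{\alpha}\right) ,j_{\beta}^{\alpha}\right) $.

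First I would dispatch the inclusion. Given $g\in \varphi_{\alpha}\left( G_{\alpha}\right) $, write $g=\varphi_{\alpha}\left( h\right) $ with $h\in G_{\alpha}$; the compatibility relation $\varphi_{\alpha}=\varphi_{\beta}\circ \varphi_{\beta}^{\alpha}$ then gives $g=\varphi_{\beta}\left( \varphi_{\beta}^{\alpha}\left( h\right) \right) \in \varphi_{\beta}\left( G_{\beta}\right) $, so $\varphi_{\alpha}\left( G_{\alpha}\right) \subset \varphi_{\beta}\left( G_{\beta}\right) $. Since $j_{\beta}^{\alpha}$ is then nothing but the inclusion of the subgroup $\varphi_{\alpha}\left( G_{\alpha}\right) $ into $\varphi_{\beta}\left( G_{\beta}\right) $, injectivity is immediate at the level of group homomorphisms.

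The crux, and the step I expect to require the most care, is the continuity of $j_{\beta}^{\alpha}$ for the final topologies $\mathfrak{T}_{\alpha}^{f}$ on $\varphi_{\alpha}\left( G_{\alpha}\right) $ and $\mathfrak{T}_{\beta}^{f}$ on $\varphi_{\beta}\left( G_{\beta}\right) $. Here I would invoke the universal property of the final (equivalently quotient) topology already recorded above: a map out of $\varphi_{\alpha}\left( G_{\alpha}\right) \left[ \mathfrak{T}_{\alpha}^{f}\right] $ is continuous if, and only if its precomposition with $\varphi_{\alpha}$ is continuous. Thus it suffices to show that $j_{\beta}^{\alpha}\circ \varphi_{\alpha}:G_{\alpha}\rightarrow \varphi_{\beta}\left( G_{\beta}\right) $ is continuous. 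But by the computation above this composite equals $\varphi_{\beta}\circ \varphi_{\beta}^{\alpha}$, reading $\varphi_{\beta}$ as the canonical surjection $G_{\beta}\rightarrow \varphi_{\beta}\left( G_{\beta}\right) $: this is a composite of two continuous morphisms, $\varphi_{\beta}^{\alpha}$ being a morphism in $\mathbf{TGr}$ by hypothesis and $\varphi_{\beta}$ being continuous for $\mathfrak{T}_{\beta}^{f}$ by the very definition of that topology. Continuity of $j_{\beta}^{\alpha}$ follows, and it is thus a monomorphism.

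Finally I would assemble the inductive-limit statement. Because each $j_{\beta}^{\alpha}$ is an inclusion of subgroups of $G$, the direct-system axioms $j_{\alpha}^{\alpha}=id$ and $j_{\gamma}^{\alpha}=j_{\gamma}^{\beta}\circ j_{\beta}^{\alpha}$ (for $\alpha \preceq \beta \preceq \gamma $) hold automatically, as does the filtrant compatibility $j_{\alpha}=j_{\beta}\circ j_{\beta}^{\alpha}$, since composing the inclusions $\varphi_{\alpha}\left( G_{\alpha}\right) \hookrightarrow \varphi_{\beta}\left( G_{\beta}\right) \hookrightarrow G$ recovers the inclusion $\varphi_{\alpha}\left( G_{\alpha}\right) \hookrightarrow G$, namely $j_{\alpha}$. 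Combining this with the fact established just before the statement, that $G=\lim\limits_{\longrightarrow }\varphi_{\alpha}\left( G_{\alpha}\right) \left[ j_{\alpha},\mathbf{TGr}\right] $, I conclude that $G$ is the filtrant inductive limit of the direct system $\left( \varphi_{\alpha}\left( G_{\alpha}\right) ,j_{\beta}^{\alpha}\right) $, which completes part (i).
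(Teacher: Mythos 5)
Your proposal is correct and takes essentially the route the paper intends: the Conclusion is stated without a separate proof precisely because it is an immediate consequence of the paragraph preceding it (the identification of $\varphi _{\alpha }\left( G_{\alpha }\right) $ with $G_{\alpha }/\ker \varphi _{\alpha }$, the coincidence $\mathfrak{T}_{\alpha }^{q}=\mathfrak{T}_{\alpha }^{f}$, and $G=\lim\limits_{\longrightarrow }\varphi _{\alpha }\left( G_{\alpha }\right) \ \left[ j_{\alpha },\mathbf{TGr}\right] $), and you fill in exactly those details --- the inclusion from the compatibility relation, continuity of $j_{\beta }^{\alpha }$ via the universal property of the final group-topology, and the automatic direct-system and filtrancy axioms for inclusions. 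One hair of care, which your application in fact respects: the universal property of $\mathfrak{T}_{\alpha }^{f}$ holds for group homomorphisms into topological groups (as in the paper's Lemma and Definition 8), not for arbitrary maps, since $\mathfrak{T}_{\alpha }^{f}$ is the final \emph{group}-topology rather than the final topology in $\mathbf{TSet}$.
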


\begin{lemma}
\label{lemma-inductive-limit-top-groups}Let $N$ be a normal subgroup of $%
G=\lim\limits_{\longrightarrow }G_{\alpha }$ $\left[ \varphi _{\alpha },%
\mathbf{TGr}\right] ,$ let $N_{\alpha }=\varphi _{\alpha }^{-1}\left(
N\right) $ and let $\bar{\varphi}_{\alpha }:G_{\alpha }/N_{\alpha
}\rightarrow G/N$ be the induced map. \ (i) Then%
\begin{equation*}
\lim\limits_{\longrightarrow }G_{\alpha }/N_{\alpha }\text{\ }\left[ \bar{%
\varphi}_{\alpha },\mathbf{TGr}\right] =G/N.
\end{equation*}%
(ii) Considering in $\mathbf{TGr}$ a direct system $\left( G_{\alpha
},\varphi _{\beta }^{\alpha }\right) _{\alpha \preceq \beta ,\alpha ,\beta
\in A}$ and the filtrant inductive limit $G=\lim\limits_{\longrightarrow
}G_{\alpha },$ the map $\bar{\varphi}_{\beta }^{\alpha }:F_{\alpha
}/N_{\alpha }\rightarrow F_{\beta }/N_{\beta }$ induced by $\varphi _{\beta
}^{\alpha }$ exists and is a morphism. \ In addition, $\left( G_{\alpha
}/N_{\alpha },\bar{\varphi}_{\beta }^{\alpha }\right) _{\alpha \preceq \beta
,\alpha ,\beta \in A}$ is a direct system in $\mathbf{TGr}$ whose filtrant
inductive limit\ is $G/N$.
\end{lemma}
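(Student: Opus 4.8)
The plan is to reduce both parts to the universal-property characterization of the inductive limit recorded in the Lemma and Definition above (the equivalence (i)$\Leftrightarrow$(ii) for a group written as $G=\bigcup_{\alpha}\varphi_{\alpha}(G_{\alpha})$), which I intend to invoke twice: once for $G$ itself and once for the candidate quotient $G/N$.

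For part (i) I would first dispose of the purely algebraic content. Since $\varphi_{\alpha}$ is a group morphism and $N$ is normal in $G$, the preimage $N_{\alpha}=\varphi_{\alpha}^{-1}(N)$ is normal in $G_{\alpha}$ and $\varphi_{\alpha}(N_{\alpha})\subset N$; hence the composite $\pi\circ\varphi_{\alpha}:G_{\alpha}\to G/N$ (with $\pi:G\twoheadrightarrow G/N$ and $\pi_{\alpha}:G_{\alpha}\twoheadrightarrow G_{\alpha}/N_{\alpha}$ the canonical surjections) kills $N_{\alpha}$ and factors uniquely as $\pi\circ\varphi_{\alpha}=\bar{\varphi}_{\alpha}\circ\pi_{\alpha}$, defining $\bar{\varphi}_{\alpha}$. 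As $\pi\circ\varphi_{\alpha}$ is continuous and $\pi_{\alpha}$ is a quotient map, the universal property of the quotient topology yields that $\bar{\varphi}_{\alpha}$ is a morphism. Next, using $\bar{\varphi}_{\alpha}\circ\pi_{\alpha}=\pi\circ\varphi_{\alpha}$ and surjectivity of $\pi_{\alpha}$ one gets $\bar{\varphi}_{\alpha}(G_{\alpha}/N_{\alpha})=\pi(\varphi_{\alpha}(G_{\alpha}))$, whence $\bigcup_{\alpha}\bar{\varphi}_{\alpha}(G_{\alpha}/N_{\alpha})=\pi(\bigcup_{\alpha}\varphi_{\alpha}(G_{\alpha}))=\pi(G)=G/N$. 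So $G/N$ is already the inductive limit in $\mathbf{Grp}$, and only the identification of topologies remains.

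The topological core of (i) is to show that the quotient topology $\mathfrak{T}/N$ on $G/N$ is the final group-topology relative to the $\bar{\varphi}_{\alpha}$. By the Lemma and Definition above, applied with $G/N$, $G_{\alpha}/N_{\alpha}$, $\bar{\varphi}_{\alpha}$ in place of $G$, $G_{\alpha}$, $\varphi_{\alpha}$, it suffices to check the universal property: for every topological group $H$ and homomorphism $\psi:G/N\to H$, $\psi$ is a morphism iff each $\psi\circ\bar{\varphi}_{\alpha}$ is. The forward implication is immediate since the $\bar{\varphi}_{\alpha}$ are morphisms. For the converse, if every $\psi\circ\bar{\varphi}_{\alpha}$ is continuous, then $(\psi\circ\pi)\circ\varphi_{\alpha}=\psi\circ\bar{\varphi}_{\alpha}\circ\pi_{\alpha}$ is continuous for each $\alpha$; applying the same Lemma and Definition now to $G$ and the $\varphi_{\alpha}$, the homomorphism $\psi\circ\pi:G\to H$ is continuous, and since $\pi$ is a quotient map, $\psi$ is continuous. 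This gives $\lim\limits_{\longrightarrow}G_{\alpha}/N_{\alpha}\,[\bar{\varphi}_{\alpha},\mathbf{TGr}]=G/N$.

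For part (ii) I would check that the transition maps descend: from $\varphi_{\alpha}=\varphi_{\beta}\circ\varphi_{\beta}^{\alpha}$ one gets, for $x\in N_{\alpha}$, that $\varphi_{\beta}(\varphi_{\beta}^{\alpha}(x))=\varphi_{\alpha}(x)\in N$, so $\varphi_{\beta}^{\alpha}(N_{\alpha})\subset N_{\beta}$ and $\varphi_{\beta}^{\alpha}$ induces $\bar{\varphi}_{\beta}^{\alpha}:G_{\alpha}/N_{\alpha}\to G_{\beta}/N_{\beta}$ with $\bar{\varphi}_{\beta}^{\alpha}\circ\pi_{\alpha}=\pi_{\beta}\circ\varphi_{\beta}^{\alpha}$, which is a morphism by the quotient-topology argument used above. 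The relations $\bar{\varphi}_{\alpha}^{\alpha}=\mathrm{id}$, $\bar{\varphi}_{\gamma}^{\alpha}=\bar{\varphi}_{\gamma}^{\beta}\circ\bar{\varphi}_{\beta}^{\alpha}$, and the filtrant compatibility $\bar{\varphi}_{\alpha}=\bar{\varphi}_{\beta}\circ\bar{\varphi}_{\beta}^{\alpha}$ follow by passing the corresponding identities for the $\varphi$'s to the quotients and invoking uniqueness of induced maps. Hence $(G_{\alpha}/N_{\alpha},\bar{\varphi}_{\beta}^{\alpha})$ is a direct system in $\mathbf{TGr}$, and part (i) identifies its filtrant inductive limit with $G/N$. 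The one genuinely topological step, and the main point to get right, is the converse direction of the universal property for $G/N$: there one must lift continuity of the maps $\psi\circ\bar{\varphi}_{\alpha}$ out of the quotients back to continuity of $\psi\circ\pi$ out of $G$, which is precisely where the defining property of the inductive-limit topology on $G$ is used; the rest is routine functoriality of quotients.
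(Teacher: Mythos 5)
Your proposal is correct and follows essentially the same route as the paper: part (i) amounts to identifying the quotient topology on $G/N$ as the final group-topology relative to the $\bar{\varphi}_{\alpha}$, and part (ii) descends the transition maps via $\varphi_{\beta}^{\alpha}(N_{\alpha})\subset N_{\beta}$ and then invokes (i), exactly as the paper does. The only difference is presentational: where the paper disposes of the topological step in (i) by citing Bourbaki's transitivity of final structures (CST19) to rewrite $\lim\limits_{\longrightarrow}G\ [\pi]=\lim\limits_{\longrightarrow}G_{\alpha}\ [\pi\circ\varphi_{\alpha}]=\lim\limits_{\longrightarrow}G_{\alpha}/N_{\alpha}\ [\bar{\varphi}_{\alpha}]$, you prove that transitivity inline by testing an arbitrary homomorphism $\psi:G/N\rightarrow H$ against the universal property, which is a sound expansion of the same argument.
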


\begin{proof}
(i) Consider the commutative diagram%
\begin{equation*}
\begin{array}{ccc}
G_{\alpha } & \overset{\varphi _{\alpha }}{\rightarrow } & G \\ 
\pi _{\alpha }\downarrow &  & \downarrow \pi \\ 
G_{\alpha }/N_{\alpha } & \overset{\bar{\varphi}_{\alpha }}{\rightarrow } & 
G/N%
\end{array}%
\end{equation*}%
where $\pi _{\alpha }:G_{\alpha }\twoheadrightarrow G_{\alpha }/N_{\alpha }$
and $\pi :G\twoheadrightarrow G/N$ are the canonical epimorphisms. \ We have%
\begin{equation*}
G/N=\lim\limits_{\longrightarrow }G\text{\ }\left[ \pi ,\mathbf{TGr}\right]
=\lim\limits_{\longrightarrow }\left( \lim\limits_{\longrightarrow
}G_{\alpha }\text{\ }\left[ \varphi _{\alpha },\mathbf{TGr}\right] \right) 
\text{\ }\left[ \pi ,\mathbf{TGr}\right] ,
\end{equation*}%
therefore (\cite{Bourbaki-E}, CST19, p. IV.20)%
\begin{eqnarray*}
G/N &=&\lim\limits_{\longrightarrow }G_{\alpha }\text{\ }\left[ \pi \circ
\varphi _{\alpha },\mathbf{TGr}\right] =\lim\limits_{\longrightarrow
}G_{\alpha }\text{\ }\left[ \bar{\varphi}_{\alpha }\circ \pi _{\alpha },%
\mathbf{TGr}\right] \\
&=&\lim\limits_{\longrightarrow }G_{\alpha }/N_{\alpha }\text{\ }\left[ \bar{%
\varphi}_{\alpha },\mathbf{TGr}\right] .
\end{eqnarray*}%
(ii) We know that $\varphi _{\alpha }=\varphi _{\beta }\circ \varphi _{\beta
}^{\alpha }$ whenever $\alpha \preceq \beta .$\ \ Let $x\in N_{\alpha },$
i.e. $\varphi _{\alpha }\left( x\right) \in N;$ equivalently, $\varphi
_{\beta }\left( \varphi _{\beta }^{\alpha }\left( x\right) \right) \in N,$
i.e. $\varphi _{\beta }^{\alpha }\left( x\right) \in N_{\beta }.$ \
Therefore, $\varphi _{\beta }^{\alpha }\left( N_{\alpha }\right) \subset
N_{\beta },$ and the map $\bar{\varphi}_{\beta }^{\alpha }:F_{\alpha
}/N_{\alpha }\rightarrow F_{\beta }/N_{\beta }$ induced by $\varphi _{\beta
}^{\alpha }$ exists. \ This map is easily seen to be continuous, and $\left(
F_{\alpha }/N_{\alpha },\bar{\varphi}_{\beta }^{\alpha }\right) _{\alpha
\preceq \beta ,\alpha ,\beta \in A}$ is a direct system in $\mathbf{TGr}$. \
By (i), the filtrant inductive limit of that direct system is $G/N$.
\end{proof}

\subsubsection{Inductive limits of locally convex spaces}

The above definitions, rationales and results still hold, \emph{mutatis
mutandis}, in the category $\mathbf{LCS}$. \ Final topologies hence
inductive limits exist in that category. \ Moreover -- contrary to what
happens in $\mathbf{TGrp}$ -- these topologies are easily described (\cite%
{Bourbaki-EVT}, p. II.29). \ The following is classical:

\begin{proposition}
Assuming that $F=\lim\limits_{\longrightarrow }F_{n}$ is a \emph{strict
countable inductive limit}, then (i) the topology induced in $F_{n}$ by that
of $F$ coincides with that of $F$ (so that the topology of $F$ is Hausdorff
if the topologies of the $F_{n}$ is Hausdorff), (ii) if $F_{n}$ is closed in 
$F_{n+1}$ for every $n,$ then $F_{n}$ is closed in $F,$ (iii) if $F_{n}$ is
complete\ for every $n,$ then $F$ is complete (\cite{Bourbaki-EVT}, \S II.4,
Prop. 9). \ In addition (\cite{Bourbaki-EVT}, \S II.4, Exerc. 14) the
topology of $F$ is the finest topology among all topologies compatible with
the vector space structure of $F$ (locally convex or not) which induce in $%
F_{n}$ a coarser topology than the given topology $\mathfrak{T}_{n}.$
\end{proposition}

In what follows, we will say that the strict inductive limit $%
F=\lim\limits_{\longrightarrow }F_{n}$ is nontrivial if $F_{n}\subsetneq
F_{n+1}.$

\subsubsection{Locally convex hulls of locally convex spaces}

Let $\left( F_{\alpha }\right) _{\alpha \in A}$ be a family of LCS's, $F$ be
an LCS, and $\left( \varphi _{\alpha }\right) _{\alpha \in A}$ be a family
of linear maps $F_{\alpha }\rightarrow F.$ \ Then, algebraically,%
\begin{equation}
E:=\sum_{\alpha \in A}\varphi _{\alpha }\left( F_{\alpha }\right) \cong
\left( \bigoplus\nolimits_{\alpha \in A}F_{\alpha }\right) /H
\label{isom-locally-convex-hull}
\end{equation}%
where $H=\ker \left( \varphi \right) ,$ $\varphi :\left( x_{\alpha }\right)
_{\alpha \in A}\in \bigoplus\nolimits_{\alpha \in A}F_{\alpha }\rightarrow
\sum_{\alpha }\varphi _{\alpha }\left( F_{\alpha }\right) .$ \ The hull
topology of $E$ is defined to be the finest locally convex topology which
makes all maps $\varphi _{\alpha }$ continuous; $E,$ endowed with this
topology, is called the locally convex hull of $\left( F_{\alpha },\varphi
_{\alpha }\right) $ (\cite{KotheI}, p. 215). \ Then $\left( \ref%
{isom-locally-convex-hull}\right) $ is an isomorphism of LCS's, $H$ is a
subspace of $\bigoplus\nolimits_{\alpha \in A}F_{\alpha }$, and $H$ is
closed if, and only if $E$ is Hausdorff.

Conversely, let $\left( F_{\alpha }\right) _{\alpha \in A}$ be a family of
LCS's and $H$ be a subspace of $\bigoplus\nolimits_{\alpha \in A}F_{\alpha
}. $ \ Then $E=\left( \bigoplus\nolimits_{\alpha \in A}F_{\alpha }\right) /H$
is a locally convex hull of $\left( F_{\alpha },\varphi _{\alpha }\right) $
where $\varphi _{\alpha }=\varphi \left\vert F_{\alpha }\right. ,$ $\varphi
:\bigoplus\nolimits_{\alpha \in A}F_{\alpha }\twoheadrightarrow E$
(canonical map).

Assuming that $\left( F_{\alpha },\varphi _{\beta }^{\alpha }\right)
_{\alpha \in A}$ is a direct system and $\varphi _{\alpha }\left( F_{\alpha
}\right) \subset \varphi _{\beta }\left( F_{\beta }\right) $ for $\alpha
\preccurlyeq \beta ,$ the locally convex hull $\sum_{\alpha \in A}\varphi
_{\alpha }\left( F_{\alpha }\right) =\bigcup\nolimits_{\alpha \in A}\varphi
_{\alpha }\left( F_{\alpha }\right) $ coincides with $\lim\limits_{%
\longrightarrow }F_{\alpha }$. \ Since $\varphi _{\beta }^{\alpha }$ is
continuous, the canonical injection $j_{\beta }^{\alpha }:\varphi _{\alpha
}\left( F_{\alpha }\right) \rightarrow \varphi _{\beta }\left( F_{\beta
}\right) $ is continuous. \ Therefore, a locally convex hull is a
generalization of an inductive limit in the category $\mathbf{LCS}$.

\section{Properties $\mathfrak{G}_{\mathcal{C}}$, $\mathfrak{O}_{\mathcal{C}%
} $ and their relations\label{sect-main-thm}}

Let $E,F$ be topological spaces and $\mathcal{C}\subset \mathfrak{P}\left(
E\times F\right) $ be such that

\begin{equation*}
\left\{ \text{closed subsets of }E\times F\right\} \subset \mathcal{C}
\end{equation*}
\ For example, $\mathcal{C}$ can be the set of all closed, or of all
sequentially closed, or of all Borel subsets of $E\times F.$

Let $\mathbf{TCat}$ be any of the categories $\mathbf{TGrp}$, $\mathbf{TVsp}%
_{\mathbf{k}}$ and $\mathbf{LCS}$; then the notion of strict morphism is
well-defined in $\mathbf{TCat}$ (\cite{Bourbaki-TG-I}, p. III.16, Def. 1). \
A morphism $f:E\rightarrow F,$ where $E,F\in \mathbf{TCat}$, is \emph{strict}
if, and only if for any open subset $\Omega $ of $E,$ $f\left( \Omega
\right) $ is open in $f\left( E\right) .$ \ Therefore, if $f$ is a
surjective morphism, $f$ is strict if, and only if it is an open mapping.

\begin{definition}
$\mathfrak{G}_{\mathcal{C}}$ and $\mathfrak{O}_{\mathcal{C}}$ are the
following properties of pairs of objects of $\mathbf{TCat}$:\newline
(i) $\left( E,F\right) $ is $\mathfrak{G}_{\mathcal{C}}$ if whenever a map $%
u:E\rightarrow F$ is such that $\limfunc{Gr}\left( u\right) \in \mathcal{C}$%
, then $u$ is a morphism.\newline
(ii) $\left( E,F\right) $ is $\mathfrak{O}_{\mathcal{C}}$ if whenever $G\in 
\mathbf{TCat}\cap \mathcal{C},G\subset E\times F,$ and $v:G%
\twoheadrightarrow E$ is a surjective morphism, then $v$ is strict.
\end{definition}

\begin{example}
Let $\mathbf{TCat}=\mathbf{TVsp}_{\mathbf{k}}$, let $E,F$ be two metrizable
complete TVS's and let $\mathcal{C}$ be the set of all closed subsets of $%
E\times F.$\newline
(1) Let $u:E\rightarrow F$ be a map. If $\limfunc{Gr}\left( u\right) \in 
\mathcal{C},$ $u$ is a morphism by the usual Closed Graph Theorem, thus $%
\left( E,F\right) $ is $\mathfrak{G}_{\mathcal{C}}.$\newline
(2) Every space $G\in \mathcal{C}$ is metrizable and complete, thus every
surjective morphism $v:G\twoheadrightarrow E$ is strict by the usual Open
Mapping Theorem, thus $\left( E,F\right) $ is $\mathfrak{O}_{\mathcal{C}}.$
\end{example}

\begin{lemma}
\label{lemma-closed-graph-isomorphism}If $E$ is Hausdorff, $\left(
E,F\right) $ is $\mathfrak{G}_{\mathcal{C}},$ and $v:F\rightarrow E$ is a
bijective morphism, then $v$ is an isomorphism.
\end{lemma}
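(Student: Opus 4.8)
The plan is to apply the hypothesis $\mathfrak{G}_{\mathcal{C}}$ to the set-theoretic inverse $u:=v^{-1}:E\rightarrow F$. Since $v$ is a bijective morphism of $\mathbf{TCat}$, its inverse $u$ is at least a \emph{map} in the sense of Notation \ref{not-Tcat}: the underlying bijective homomorphism (a group homomorphism, or a $\mathbf{k}$-linear map, according to which category $\mathbf{TCat}$ is) has a set-theoretic inverse which is automatically a homomorphism of the same algebraic type. Thus $u$ qualifies as an admissible candidate for property $\mathfrak{G}_{\mathcal{C}}$, and it remains only to check that $\limfunc{Gr}\left( u\right) \in \mathcal{C}$. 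Because $\mathcal{C}$ contains every closed subset of $E\times F$, it suffices to prove that $\limfunc{Gr}\left( u\right) $ is closed in $E\times F$.

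To see this, first note that $v:F\rightarrow E$ is a morphism, hence continuous, and $E$ is Hausdorff by assumption; Lemma \ref{lemma-continuous-then-closed} then gives that $\limfunc{Gr}\left( v\right) $ is closed in $F\times E$. Next I would relate the two graphs through the coordinate-exchange homeomorphism $\sigma :E\times F\rightarrow F\times E$, $\left( x,y\right) \mapsto \left( y,x\right) $. Since $u=v^{-1}$, writing $y=u\left( x\right) $ one has $\left( x,u\left( x\right) \right) =\left( v\left( y\right) ,y\right) $, so $\sigma \left( \limfunc{Gr}\left( u\right) \right) =\limfunc{Gr}\left( v\right) $, that is $\limfunc{Gr}\left( u\right) =\sigma ^{-1}\left( \limfunc{Gr}\left( v\right) \right) $. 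As $\sigma $ is a homeomorphism and $\limfunc{Gr}\left( v\right) $ is closed, $\limfunc{Gr}\left( u\right) $ is closed in $E\times F$, whence $\limfunc{Gr}\left( u\right) \in \mathcal{C}$.

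Finally, applying property $\mathfrak{G}_{\mathcal{C}}$ to $u$ shows that $u=v^{-1}$ is a morphism of $\mathbf{TCat}$. Combined with the fact that $v$ is already a morphism, this means $v$ is an isomorphism, as claimed. No step is genuinely hard here: the only points requiring a little care are the bookkeeping distinction between a \emph{map} and a \emph{morphism} (so that $v^{-1}$ is legitimately an object of the kind to which $\mathfrak{G}_{\mathcal{C}}$ applies), and the identification $\limfunc{Gr}\left( u\right) =\sigma ^{-1}\left( \limfunc{Gr}\left( v\right) \right) $ via the flip homeomorphism, which is precisely what transfers the closedness of $\limfunc{Gr}\left( v\right) $ in $F\times E$ to that of $\limfunc{Gr}\left( u\right) $ in $E\times F$.
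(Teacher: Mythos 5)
Your proof is correct and follows essentially the same route as the paper: apply Lemma \ref{lemma-continuous-then-closed} to the continuous map $v$ into the Hausdorff space $E$, transfer closedness of $\limfunc{Gr}\left( v\right) $ to $\limfunc{Gr}\left( v^{-1}\right) $, and invoke $\mathfrak{G}_{\mathcal{C}}$ since $\mathcal{C}$ contains all closed sets. Your version merely makes explicit two points the paper leaves implicit (that $v^{-1}$ is an admissible algebraic map, and the flip-homeomorphism argument behind ``thus so is the graph of $v^{-1}$''), which is fine.
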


\begin{proof}
The graph of $v$ is closed (Lemma \ref{lemma-continuous-then-closed}), thus
so is the graph of $v^{-1}:E\rightarrow F$. \ Therefore, this graph is $%
\mathcal{C}$ in $E\times F$, and since $\left( E,F\right) $ is $\mathfrak{G}%
_{\mathcal{C}}\mathfrak{,}$ $v^{-1}$ is continuous.
\end{proof}

The relations between $\mathfrak{G}_{\mathcal{C}}$ and $\mathfrak{O}_{%
\mathcal{C}}$ are expressed in the following theorem when $\mathbf{TCat}=%
\mathbf{TGrp}$:

\begin{theorem}
\label{Main thm}Let $E,F$ be topological groups.\newline
(i) $\mathfrak{O}_{\mathcal{C}}\Rightarrow \mathfrak{G}_{\mathcal{C}}.$%
\newline
(ii) Conversely, if $E$ is Hausdorff and $\left( E,F/N\right) $ is $%
\mathfrak{G}_{\mathcal{C}}$ for every closed normal subgroup $N$ of $F$,
then every surjective morphism $v:F\twoheadrightarrow E$ is strict. \ 
\newline
(iii) Therefore whenever a morphism $v:F\rightarrow E$ is such that $\left(
E,\func{im}\left( v\right) /\left( N\cap \func{im}\left( v\right) \right)
\right) $ is $\mathfrak{G}_{\mathcal{C}}$ for every normal subgroup $N$ of $%
F $ such that $N\cap \func{im}\left( v\right) $ is closed in $\func{im}%
\left( v\right) ,$ this morphism $v$ is strict.
\end{theorem}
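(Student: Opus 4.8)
For part (i) I would argue directly from the two structures at hand. Given a map $u:E\rightarrow F$ (i.e.\ a group homomorphism) with $\operatorname{Gr}(u)\in\mathcal{C}$, observe that the graph $G:=\operatorname{Gr}(u)$ is a subgroup of $E\times F$, so with the induced topology it is an object of $\mathbf{TCat}$ lying in $\mathcal{C}$ and contained in $E\times F$; moreover the projection $p:G\rightarrow E,\ (x,u(x))\mapsto x$ is a surjective morphism. Since $(E,F)$ is $\mathfrak{O}_{\mathcal{C}}$, the map $p$ is strict, hence open (a surjective strict morphism is open). By the opening Lemma of the paper, $u$ is continuous precisely when $p$ is open, so $u$ is a morphism and $(E,F)$ is $\mathfrak{G}_{\mathcal{C}}$.

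For part (ii), let $v:F\twoheadrightarrow E$ be a surjective morphism and set $N:=\ker v$. Because $E$ is Hausdorff and $v$ is continuous, $N=v^{-1}(\{e\})$ is a closed normal subgroup of $F$, so the hypothesis applies to it. I would factor $v=\bar v\circ\pi$, where $\pi:F\twoheadrightarrow F/N$ is the canonical epimorphism and $\bar v:F/N\rightarrow E$ is the induced bijective morphism. Since $(E,F/N)$ is $\mathfrak{G}_{\mathcal{C}}$ and $E$ is Hausdorff, Lemma~\ref{lemma-closed-graph-isomorphism} shows that $\bar v$ is an isomorphism; in particular $\bar v$ is open. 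As $\pi$ is open (quotient maps of topological groups are open), the composite $v=\bar v\circ\pi$ is open, i.e.\ strict. Only $N=\ker v$ is actually used, but the hypothesis is stated for every closed normal $N$ so as to cover all surjections at once.

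Part (iii) I would deduce from (ii) by corestriction. By definition $v:F\rightarrow E$ is strict exactly when $v(\Omega)$ is open in $\operatorname{im}(v)$ for every open $\Omega\subseteq F$; equivalently, the corestriction $\tilde v:F\twoheadrightarrow\operatorname{im}(v)$ is open. If $\operatorname{im}(v)$ (with its subspace topology) were Hausdorff, I would simply apply (ii) to $\tilde v$ with $\operatorname{im}(v)$ playing the role of the target: the displayed closed-graph hypotheses provide, via Lemma~\ref{lemma-closed-graph-isomorphism}, exactly the isomorphism needed to conclude that $\tilde v$ is open, and hence that $v$ is strict. The point of the clause that $N\cap\operatorname{im}(v)$ be closed in $\operatorname{im}(v)$ is that, when $E$ is not separated, one cannot use $\operatorname{im}(v)$ itself; one passes instead to the Hausdorff quotient $\operatorname{im}(v)/(N\cap\operatorname{im}(v))$, the closedness guaranteeing (through Lemma~\ref{lemma-continuous-then-closed}) that this quotient is separated, so that Lemma~\ref{lemma-closed-graph-isomorphism} is again available.

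The step I expect to be the main obstacle is precisely this passage to a Hausdorff quotient and the transfer of openness back to $\operatorname{im}(v)$. Composing $\tilde v$ with the open quotient map $q:\operatorname{im}(v)\twoheadrightarrow\operatorname{im}(v)/(N\cap\operatorname{im}(v))$ gives one implication for free, but for the converse one must know that the images $v(\Omega)$ of open sets are saturated for the subgroup $N\cap\operatorname{im}(v)$ that has been divided out---otherwise openness of $q\circ\tilde v$ does not return openness of $\tilde v$. This saturation is automatic when the divided-out subgroup sits inside the closure of the identity, since in a topological group every open set is then a union of its cosets; the real work is to choose $N$, among those permitted by the hypothesis, so that this holds, and to reconcile the quotients $\operatorname{im}(v)/(N\cap\operatorname{im}(v))$ appearing in the assumption with the quotient $F/\ker v$ produced by the factorisation in (ii). Once this bookkeeping is in place, openness of $\tilde v$---and therefore strictness of $v$---follows, which is why (iii) is stated as a consequence of (ii).
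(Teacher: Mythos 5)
Your proofs of (i) and (ii) are correct and essentially identical to the paper's own: the same graph-as-subgroup construction with the projection $p:\operatorname{Gr}(u)\rightarrow E$ for (i) (the paper writes $u=q\circ p^{-1}$ with $q$ the second projection instead of citing its opening lemma, an immaterial difference), and the same factorization $v=\bar{v}\circ \pi $ through $F/\ker v$ combined with Lemma \ref{lemma-closed-graph-isomorphism} for (ii).

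Part (iii) is where your proposal stops short of a proof. The paper disposes of (iii) in one sentence --- the subgroup $N^{\prime }=N\cap \operatorname{im}(v)$ is normal in $F^{\prime }=\operatorname{im}(v)$, so (iii) follows from (ii) --- and your instinct that this reduction is not free is sound: the corestriction $\tilde{v}:F\twoheadrightarrow F^{\prime }$ may have non-Hausdorff codomain, and openness of $q\circ \tilde{v}$ into a Hausdorff quotient does not by itself return openness of $\tilde{v}$. But you end with ``once this bookkeeping is in place,'' never exhibiting a subgroup for which your plan works; as written, your (iii) is an announced strategy, not an argument. The missing step has a single canonical completion: take $N^{\prime }=\overline{\{e\}}$, the closure of the identity in $F^{\prime }$ (this is $N\cap F^{\prime }$ for $N=\overline{\{e\}}$ in the ambient group; note that for $N\cap \operatorname{im}(v)$ to make sense at all, $N$ must be read as a normal subgroup of $E$ rather than of $F$, which is also what the paper's proof line implicitly uses). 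Then $N^{\prime }$ is closed and normal in $F^{\prime }$, so $F^{\prime }/N^{\prime }$ is Hausdorff, and the saturation you worried about is automatic, exactly along the line you sketched: since $\overline{\{e\}}$ is contained in every neighbourhood of $e$, for every open $U\subseteq F^{\prime }$ and every $x\in U$ one has $xN^{\prime }\subseteq x\left( x^{-1}U\right) =U$, whence $UN^{\prime }=U$ and $q^{-1}\left( q\left( U\right) \right) =U$ for the (open) quotient map $q:F^{\prime }\rightarrow F^{\prime }/N^{\prime }$; thus $\tilde{v}$ is open if, and only if $q\circ \tilde{v}$ is. One then applies the argument of (ii) verbatim to the surjective morphism $q\circ \tilde{v}:F\twoheadrightarrow F^{\prime }/N^{\prime }$, whose codomain is Hausdorff, using the $\mathfrak{G}_{\mathcal{C}}$ hypothesis on the pair involving $\operatorname{im}(v)/\left( N\cap \operatorname{im}(v)\right) $ together with Lemma \ref{lemma-closed-graph-isomorphism} to make the induced bijection onto $F^{\prime }/N^{\prime }$ an isomorphism. (Incidentally, Hausdorffness of the quotient comes from the standard fact that $G/H$ is Hausdorff exactly when $H$ is closed, not from Lemma \ref{lemma-continuous-then-closed} as you suggest.) With that one choice your ``real work'' evaporates; without it, your treatment of (iii) has a genuine gap.
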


\begin{proof}
(i): This proof is a variant of that of (\cite{Treves}, Prop. 17.3). \ Let $%
u:E\rightarrow F$ be a map (i.e. a morphism of groups). \ Its graph $%
\limfunc{Gr}\left( u\right) $ is a subgroup $G$ of $E\times F.$ Let us endow 
$E\times F$ with the product topology $\pi $ and $G$ with the topology
induced by $\pi .$ \ Then the two projections 
\begin{equation*}
p:G\rightarrow E:\left( x,u\left( x\right) \right) \mapsto x,\quad
q:G\rightarrow F:\left( x,u\left( x\right) \right) \mapsto u\left( x\right)
\end{equation*}%
are continuous and%
\begin{equation*}
u=q\circ p^{-1}.
\end{equation*}

Assume that $\left( E,F\right) $ is $\mathfrak{O}_{\mathcal{C}}.$ \ Then, if 
$\limfunc{Gr}\left( u\right) \in \mathcal{C}$ , $p$ is open, thus $p^{-1}$
is continuous, and $u$ is continuous too. \ Therefore $\left( E,F\right) $
is $\mathfrak{G}_{\mathcal{C}}.$

(ii): Let $v:F\twoheadrightarrow E$\ be a surjective morphism and consider
the following commutative diagram%
\begin{equation*}
\begin{array}{ccc}
F & \overset{v}{\longrightarrow } & E \\ 
\downarrow \pi & \nearrow \bar{v} &  \\ 
F/\ker \left( v\right) &  & 
\end{array}%
\end{equation*}%
where $\bar{v}:F/\ker \left( v\right) \rightarrow E$ is a bijective morphism.

Assume that $E$ is Hausdorff. \ Then the normal subgroup $N=\ker \left(
v\right) $ of $F$\ is closed. \ If in addition $\left( E,F/N\right) $ is $%
\mathfrak{G}_{\mathcal{C}}\mathfrak{,}$ $\bar{v}$ is an isomorphism by Lemma %
\ref{lemma-closed-graph-isomorphism}. \ As a result, the morphism $v$ is
strict.

(iii) The subgroup $N^{\prime }=N\cap \func{im}\left( v\right) $ is normal
in $F^{\prime }=\func{im}\left( v\right) $, thus (iii) is a consequence of
(ii).
\end{proof}

\section{Typology of topological spaces, groups and vector spaces\label%
{Sect-typology}}

\subsection{Typology of topological spaces}

A subspace $A$ of a topological space $X$ is \emph{rare} when $X\backslash A$
is everywhere dense (in other words, $\overset{\circ }{\bar{A}}$ is empty).
\ A subspace $B$ of $X$ is \emph{meagre} (or "of first category") if it is a
countable union of rare subspaces. \ A subset of $X$ is called non-meagre
(or "of second category") when it is not meagre (i.e. when it is not of
first category). \ The following is classical (\cite{Bourbaki-TG-II}, p.
IX.54, Def. 3).

\begin{claim}
Let $X$ be a topological space.\newline
(1) The following conditions are equivalent:\newline
(a) every open subset of $X$ which is non-empty is non-meagre;\newline
(b) if a subset $B$ is meagre, then $X\backslash B$ is everywhere dense,
i.e. $\mathring{B}\ $is empty;\newline
(c) every countable union of closed sets with empty interior has an empty
interior;\newline
(c') every countable intersection of open everywhere dense subsets in $X$ is
everywhere dense in $X.$\newline
(2) A topological space $X$ is \emph{Baire} if the above equivalent
conditions hold.
\end{claim}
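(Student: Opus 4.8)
The plan is to prove the equivalence by the route $(a)\Leftrightarrow(b)\Leftrightarrow(c)\Leftrightarrow(c')$, preceded by three elementary observations that I would use throughout. First, a closed set $C$ has empty interior if, and only if $X\setminus C$ is open and everywhere dense, since $\overline{X\setminus C}=X\setminus\mathring{C}$; hence the closed sets with empty interior are exactly the complements of the open everywhere dense sets. Second, if $R$ is rare and $S\subset R$, then $S$ is rare, because $\mathring{\overline{S}}\subset\mathring{\overline{R}}=\emptyset$; consequently every subset of a meagre set is meagre. Third, a set $R$ is rare precisely when its closure $\overline{R}$ is closed with empty interior.

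Granting these, the equivalence $(c)\Leftrightarrow(c')$ is immediate from De Morgan's laws together with the first observation: writing a family of closed sets with empty interior as $C_n=X\setminus U_n$ with $U_n$ open and everywhere dense, the set $\bigcup_n C_n=X\setminus\bigcap_n U_n$ has empty interior if, and only if $\bigcap_n U_n$ is everywhere dense.

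Next I would settle $(b)\Leftrightarrow(c)$. For $(b)\Rightarrow(c)$, given closed sets $(C_n)$ with empty interior, each $C_n$ is rare, so $B=\bigcup_n C_n$ is meagre and $(b)$ yields $\mathring{B}=\emptyset$, which is $(c)$. For $(c)\Rightarrow(b)$, let $B=\bigcup_n R_n$ be meagre with each $R_n$ rare. By the third observation each $\overline{R_n}$ is closed with empty interior, so $\bigcup_n\overline{R_n}$ has empty interior by $(c)$; since $B\subset\bigcup_n\overline{R_n}$, this forces $\mathring{B}=\emptyset$, which is $(b)$.

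Finally I would establish $(a)\Leftrightarrow(b)$. For $(b)\Rightarrow(a)$, if a non-empty open set $U$ were meagre, then $(b)$ would give $U=\mathring{U}=\emptyset$, a contradiction, so $U$ is non-meagre. For $(a)\Rightarrow(b)$, let $B$ be meagre and suppose $\mathring{B}\neq\emptyset$; then $\mathring{B}$ is a non-empty open set contained in $B$, hence meagre by the second observation, contradicting $(a)$, so $\mathring{B}=\emptyset$. The only step needing real care is precisely this last implication: it is the monotonicity of meagreness (the second observation) that lets one transfer meagreness from $B$ to its interior, and without it the argument would not close. Everything else is formal manipulation of interiors, closures and complements.
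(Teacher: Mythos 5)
Your proof is correct and complete. Note first that the paper itself does not prove this statement at all: it presents the equivalence as classical and simply cites Bourbaki (\cite{Bourbaki-TG-II}, p.\ IX.54), so there is no in-paper argument to compare against; what you have supplied is essentially the standard Bourbaki-style proof, organized as the chain $(a)\Leftrightarrow(b)\Leftrightarrow(c)\Leftrightarrow(c')$. Each step checks out: the duality $\overline{X\setminus C}=X\setminus\mathring{C}$ makes $(c)\Leftrightarrow(c')$ a pure De Morgan computation; $(c)\Rightarrow(b)$ correctly passes from the rare sets $R_n$ to their closures $\overline{R_n}$ and uses monotonicity of the interior; and you rightly identify the monotonicity of meagreness (subsets of meagre sets are meagre) as the one substantive ingredient, needed for $(a)\Rightarrow(b)$ where $\mathring{B}\subset B$ inherits meagreness. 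One definitional point deserves attention, and you implicitly resolved it the right way: the paper's definition of \emph{rare} says ``$X\backslash A$ is everywhere dense (in other words, $\mathring{\overline{A}}$ is empty)'', but these two phrasings are not equivalent --- density of $X\setminus A$ only says $\mathring{A}=\emptyset$ (e.g.\ $\mathbb{Q}\subset\mathbb{R}$ has dense complement yet dense closure). Your third observation, $R$ rare iff $\overline{R}$ is closed with empty interior, takes the parenthetical (nowhere dense) reading, which is the standard one and the only one under which the claim holds: with the weaker reading, $(c)\Rightarrow(b)$ would fail, since $\mathbb{Q}$ would be rare, hence $\mathbb{R}\setminus\mathbb{Q}$ ``meagre'' with nonempty interior, while $(c)$ still holds for $\mathbb{R}$. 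So your proof is sound precisely because it commits to the correct definition where the paper's wording is ambiguous.
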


A complete metrizable topological space is Baire, and a locally compact
topological space as well ("Baire's theorem").

\begin{definition}
A topological space $X$ is\newline
$\bullet $ \emph{first countable} (or satisfies the \emph{first axiom of
countability}) if every point has a countable base of neighborhoods;\newline
$\bullet $ \emph{second countable} (or satisfies the \emph{second axiom of
countability}) if its topology has a countable base;\newline
$\bullet $ \emph{separable} if there exists a countable dense subspace.
\end{definition}

In general,%
\begin{equation*}
\text{second countable }\Rightarrow \text{ first countable and separable}
\end{equation*}

Let $X$ be a topological space and $\mathfrak{P}\left( X\right) $ (resp. $%
\mathfrak{K}\left( X\right) $) be the set of all subsets (resp. of all
compact subsets) of $X.$ \ The topological space $X$ is

\begin{itemize}
\item \emph{completely regular} if it is uniformizable and Hausdorff (\cite%
{Bourbaki-TG-II}, p. IX.8, Def. 4),

\item \emph{inexhaustible} if for every countable family of closed subsets $%
A_{n}\subset X,$ the relation $X=\cup _{n}A_{n}$ implies that there exists $%
n $ such that $A_{n}$ has nonempty interior (\cite{Hofmann}, Def. 3.3.a).

\item \emph{Polish} if it is metrizable, complete and separable (\cite%
{Bourbaki-TG-II}, p. IX.57, Def. 1),

\item \emph{Suslin} if it is Hausdorff and there exist a Polish space $P$
and a continuous surjection $P\rightarrow X$ (\cite{Bourbaki-TG-II}, p.
IX.59, Def. 1),

\item \emph{Lindel\"{o}f} if, from any open covering, one can extract a
countable covering (\cite{Bourbaki-TG-II}, p. IX.76, Def. 1),

\item $K_{\sigma \delta }$ if it is a countable intersection of countable
unions of compact sets (\cite{Choquet}, \S 3),

\item $K$\emph{-analytic} if there exists a $K_{\sigma \delta }$ space $Y$
and a continuous surjection $f:Y\rightarrow X.$
\end{itemize}

The definitions given below are due to Martineau \cite{Martineau-Studia} ($K$%
-Suslin spaces) and Valdivia (\cite{Valdivia-book}, p. 52) (quasi-Suslin
spaces).

\begin{definition}
\label{def-K-Suslin-quasi-Suslin}A topological space $X$ is called\newline
(1) $K$\emph{-Suslin} if there exist a Polish space $P$ and a map $%
T:P\rightarrow \mathfrak{K}\left( X\right) $ such that\newline
(i) $X=\bigcup\nolimits_{p\in P}T\left( p\right) ,$ and\newline
(ii) $T$ is semi-continuous, i.e. for every $p\in P,$ and for every
neighborhood $V$ of $T\left( p\right) $ in $X,$ there exists a neighborhood $%
U$ of $p$ in $P$ such that $T\left( U\right) \subset V;$\newline
(2) \emph{quasi-Suslin} if there exist a Polish space $P$ and a map $%
T:P\rightarrow \mathfrak{P}\left( X\right) $ such that\newline
(i') $X=\bigcup\nolimits_{p\in P}T\left( p\right) ,$\newline
(ii') if $\left( p_{n}\right) $ is a sequence in $P$ converging to $p$ and
if $x_{n}\in T\left( p_{n}\right) $ for every $n\geq 0,$ then $\left(
x_{n}\right) $ has an cluster point in $X$ belonging to $T\left( p\right) .$
\end{definition}

A completely regular topological space is $K$-Suslin if, and only if it is $%
K $-analytic \cite{Rogers}.

For a completely regular topological space,%
\begin{equation*}
\begin{array}{ccccccc}
&  &  &  & \text{metrizable quasi-Suslin} &  &  \\ 
&  &  &  & \Downarrow &  &  \\ 
\text{Polish} & \Rightarrow & \text{Suslin} & \Rightarrow & K\text{-Suslin}
& \Rightarrow & \text{quasi-Suslin} \\ 
\Downarrow &  & \Downarrow &  & \Updownarrow &  &  \\ 
\text{Baire} &  & \text{separable} &  & K\text{-analytic} &  &  \\ 
\Downarrow &  &  &  & \Uparrow &  &  \\ 
\text{inexhaustible} & \Rightarrow & \text{2nd category} & \Leftarrow & 
\text{Baire }K\text{-Suslin} &  & 
\end{array}%
\end{equation*}%
and%
\begin{equation*}
\begin{array}{ccccc}
K\text{-Suslin} & \Rightarrow & \text{Lindel\"{o}f} & \Rightarrow & \text{%
paracompact} \\ 
\Updownarrow &  & \Uparrow &  &  \\ 
\text{quasi-Suslin Lindel\"{o}f} &  & \text{2nd countable} & \Rightarrow & 
\text{1st countable separable}%
\end{array}%
\end{equation*}

\begin{proof}
Polish $\Rightarrow $ Suslin $\Rightarrow K$-Suslin $\Rightarrow $
quasi-Suslin is clear \cite{Martineau-Studia}, (\cite{Valdivia-book}, p. 60,
(3)). \ Metrizable quasi-Suslin $\Rightarrow K$-Suslin by (\cite%
{Valdivia-book}, p. 67, (26)). \ Polish $\Rightarrow $ Baire (Baire's
theorem) $\Rightarrow $ nonmeagre in itself $\Leftrightarrow $ inexhaustible
(\cite{Bourbaki-TG-II}, p. IX.112, Exerc. 7) $\Rightarrow $ of second
category. \ $K$-Suslin $\Rightarrow $ Lindel\"{o}f $\Rightarrow $
paracompact by (\cite{Martineau-Studia}, Prop. 2) and (\cite{Bourbaki-TG-II}%
, p. IX.76, Prop. 2). \ 2nd countable $\Rightarrow $ Lindel\"{o}f and 2nd
countable $\Rightarrow $ 1st countable separable according to the
definitions. A space is $K$-Suslin if, and only if it is quasi-Suslin and
Lindel\"{o}f (\cite{Orihuela}, p. 148).
\end{proof}

In addition, every space which is metrizable, locally compact and countable
at infinity is Polish.

A subset $S$ of a topological space is called \emph{Borel} (resp. \emph{%
sq-Borel}) if it belongs to the smallest $\sigma $-algebra $\mathfrak{B}$ of 
$X$ generated by the closed (resp. sequentially closed) subsets of $X$ (\cite%
{De Wilde}, p. 130, Def. 7.2.6). \ (A Borel set is sq-Borel.)

Let $S$ be a subset of a topological space $X$; $S$ is called $F$\emph{-Borel%
} if it belongs to the smallest subset $F\mathfrak{B}$ of $\mathfrak{P}%
\left( X\right) $ which is stable by countable union, countable intersection
and which contains the closed subsets of $X$ \cite{Martineau-Studia}. \ Note
that $F\mathfrak{B}$ is not a $\sigma $-algebra in general, for $A\in F%
\mathfrak{B}$ does not imply $X\backslash A\in F\mathfrak{B}$ (in
particular, open subsets of $X$ need not belong to $F\mathfrak{B}$). \
Obviously, $F\mathfrak{B}\subset \mathfrak{B}.$

Considering Hausdorff spaces, the following hereditary properties hold (\cite%
{Bourbaki-TG-II},\S IX.6), (\cite{Valdivia-book}, \S I.4):

\begin{itemize}
\item An open subset of a Polish (resp. Suslin, Baire) space is such.

\item A closed subspace of a Suslin (resp. $K$-Suslin, quasi-Suslin) space
is such. \ Moreover, a sequentially closed subspace of a Suslin space is
Suslin.

\item A sequentially closed subspace and the image by a linear continuous
surjection of a semi-Suslin space are again semi-Suslin.

\item A countable sum or product of Polish (resp. Suslin) spaces is again
such.

\item A countable union of Suslin spaces is again Suslin.

\item A finite product of $K$-Suslin (resp. quasi-Suslin) spaces is again
such.

\item A countable product, a countable intersection, a countable projective
limit of\ Suslin (resp. $K$-Suslin, quasi-Suslin, semi-Suslin) spaces is
such.

\item A countable intersection of Polish spaces is again Polish.

\item A product of metrizable complete spaces is a Baire space (but a
product of Baire spaces need not be Baire).

\item The image by a continuous surjection of a Suslin (resp. $K$-Suslin,
quasi-Suslin, Lindel\"{o}f) space is such. \ Moreover the image of a Suslin
space by a sequentially continuous surjection is Suslin.

\item If $\left( X_{n}\right) $ is a sequence of $K$-Suslin (resp.
quasi-Suslin) spaces covering $X,$ then $X$ is $K$-Suslin (resp.
quasi-Suslin).

\item sq-Borel subspaces of Suslin spaces are Suslin (\cite{De Wilde}, p.
130, Prop. VII.2.7).

\item A space which contains a dense Baire subspace is Baire \cite%
{Haworth-McCoy}.

\item $F$-Borel subspaces of $K$-Suslin spaces are $K$-Suslin \cite%
{Martineau-Studia}.
\end{itemize}

\subsection{Typology of topological groups}

A topological group is uniformizable, thus a Hausdorff topological group is
completely regular. \ A topological group $G$ is metrizable if, and only if
it is Hausdorff and first countable (\cite{Bourbaki-TG-II}, p. IX.23, Prop.
1).

\begin{lemma}
A topological group is second countable if, and only if it is both first
countable and separable (\cite{Hofmann}, Exerc. E3.3).
\end{lemma}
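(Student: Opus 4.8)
The plan is to dispatch the two implications separately, the first being a general topological fact and the second using the group structure essentially. For the direct implication I would simply recall that second countability implies first countability and separability for \emph{any} topological space, as already noted in the excerpt: if $\mathcal{B}$ is a countable base, then for each $x$ the members of $\mathcal{B}$ containing $x$ form a countable base of neighborhoods of $x$, and choosing one point in each nonempty member of $\mathcal{B}$ produces a countable dense subset. No use of the group structure is needed here.

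For the converse --- the substantive direction, which fails for general topological spaces (the Sorgenfrey line is first countable and separable but not second countable) --- I would exploit that a group-topology is determined by the neighborhood filter of the neutral element $e$ together with the translations, all of which are homeomorphisms. First I would extract from first countability a countable base $(V_n)_{n\in\mathbb{N}}$ of \emph{open, symmetric} neighborhoods of $e$: start from any countable base of neighborhoods, pass to interiors to make them open, and replace each $W$ by $W\cap W^{-1}$ (open because inversion is a homeomorphism) to make them symmetric. I would also record the standard consequence of continuity of the group law at $(e,e)$: for every $n$ there is an $m$ with $V_m V_m\subset V_n$. Then, using separability, I would fix a countable dense subset $D=\{d_k : k\in\mathbb{N}\}$.

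The key claim is that the countable family $\mathcal{B}=\{d_k V_n : k,n\in\mathbb{N}\}$ is a base for the topology of $G$. To verify it, let $U$ be open and $x\in U$. Since $x^{-1}U$ is a neighborhood of $e$, choose $n$ with $x V_n\subset U$, and then $m$ with $V_m V_m\subset V_n$. The set $x V_m$ is open and nonempty (it contains $x$), so by density there is $d_k\in D\cap x V_m$, say $d_k=x v$ with $v\in V_m$. Then $x=d_k v^{-1}\in d_k V_m$ because $V_m$ is symmetric, while $d_k V_m=x v V_m\subset x V_m V_m\subset x V_n\subset U$. Hence $x\in d_k V_m\subset U$ with $d_k V_m\in\mathcal{B}$, which shows that $\mathcal{B}$ is a countable base and therefore that $G$ is second countable.

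The main obstacle lies entirely in the converse: manufacturing a \emph{global} countable base from only a \emph{local} countable base at $e$ and a countable dense set. The group structure is precisely what makes this possible --- translations turn the single neighborhood base at $e$ into neighborhood bases at every point, and the inclusion $V_m V_m\subset V_n$ (symmetry together with continuity of multiplication) is exactly what allows the dense points $d_k$ to approximate an arbitrary $x$ closely enough that $d_k V_m$ slips between $x$ and $U$. Once the open/symmetric normalization of the $(V_n)$ and the $V_m V_m\subset V_n$ refinement are in hand, the rest is the short computation above.
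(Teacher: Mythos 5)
Your proof is correct and complete: the forward direction is the general topological fact, and your converse argument --- an open symmetric countable base $(V_n)$ at $e$, the refinement $V_mV_m\subset V_n$ from continuity of multiplication, and the verification that $\left\{ d_kV_n:k,n\in \mathbb{N}\right\} $ is a base via $x=d_kv^{-1}\in d_kV_m\subset xV_mV_m\subset xV_n\subset U$ --- is exactly the standard solution to the exercise the paper cites (\cite{Hofmann}, Exerc. E3.3), the paper itself giving no proof beyond that reference. Your side remark is also apt: the Sorgenfrey line shows the converse fails without the group structure (and, consistently, it is not a topological group, since inversion is not continuous there).
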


Consider a Hausdorff topological group $G$ and a subgroup $H.$ \ The
homogeneous space $G/H$ is Hausdorff if, and only if $H$ a closed subspace
of $G.$ \ Since $G/H$ is the image of the continuous canonical map $%
G\rightarrow G/H$ we see that

\begin{itemize}
\item The quotient of a Suslin (resp. $K$-Suslin, quasi-Suslin, Lindel\"{o}%
f) group by a closed subgroup is Suslin (resp. $K$-Suslin, quasi-Suslin,
Lindel\"{o}f).
\end{itemize}

Let $\left( G_{\alpha }\right) $ be a family of topological groups, and $%
G=\lim\limits_{\longrightarrow }G_{\alpha }$ in $\mathbf{TGr}.$ \ The
topological group $G$ is called

\begin{itemize}
\item of type $\beta $ is the groups $G_{\alpha }$ are Suslin and Baire.

\item of type $K.\beta $ if the groups $G_{\alpha }$ are $K$-Suslin and
Baire \cite{Martineau-Studia}.
\end{itemize}

A metrizable separable group is second countable, thus Lindel\"{o}f. \ If in
addition it is locally compact and countable at infinity, it is Polish.

\subsection{Typology of locally convex spaces}

Let $E$ be a TVS and $A$ be a subset of $E$. \ This set is called \emph{%
CS-compact} if every series of the form%
\begin{equation*}
\dsum\limits_{n\geq 0}a_{n}x_{n},\quad x_{n}\in A,\quad a_{n}\geq 0,\quad
\dsum\limits_{n\geq 0}a_{n}=1
\end{equation*}%
converges to a point of $A$. \ If $E$ is an LCS, every CS-compact set is
bounded and convex. \ The set of all CS-compact subsets of $E$ is denoted by 
$\mathcal{S}\left( E\right) .$

The following definition is due to Valdivia (\cite{Valdivia-book}, p. 78):

\begin{definition}
A locally convex space $E$ is called \emph{semi-Suslin} if it is locally
convex and there exist a Polish space $P$ and a map $T:P\rightarrow \mathcal{%
S}\left( E\right) $ such that\newline
(i\textquotedblright ) $E=\bigcup\nolimits_{p\in P}T\left( p\right) ,$%
\newline
(ii\textquotedblright ) if $\left( p_{n}\right) $ is a sequence in $P$
converging to $p$, there exists $A\in \mathcal{S}\left( E\right) $ such that 
$T\left( p_{n}\right) \in A$ for all $n.$
\end{definition}

For an LCS (\cite{Valdivia-book}, p. 80, (3)) every quasi-Suslin and locally
complete is semi-Suslin. \ A Fr\'{e}chet space and the strong dual of a
metrizable locally convex space are semi-Suslin (\cite{Valdivia-book}, p.
81). \ Conversely, a convex-Baire semi-Suslin space is Fr\'{e}chet (\cite%
{Valdivia-book}, p. 92, (26)).

A Hausdorff LCS is called \emph{ultrabornological} if it is an inductive
limit of Banach spaces. \ A Fr\'{e}chet space, the strong dual of a
reflexive Fr\'{e}chet space, a quasi-complete bornological space, an
inductive limit of ultrabornological spaces, are all ultrabornological (\cite%
{Bourbaki-EVT}, n$%
{{}^\circ}%
$III.6.1 and \S III.4, Exercise 20 \& 21; \cite{Grothendieck-EVT}, Chap. 4,
Part 1, \S 5). \ A $\beta $ LCS is ultrabornological, but not conversely.

A $K.\beta $ LCS is called $K$-ultrabornological.

A barrel in a TVS is a set which is convex, balanced, absorbing and closed.
\ A barrelled space is an LCS in which every barrel is a neighborhood of $0$
(\cite{Bourbaki-EVT}, \S III.4, Def. 2). \ An inductive limit and a product
of barrelled spaces, a quotient of a barrelled space, are all barrelled (%
\cite{Bourbaki-EVT}, p. III.25). \ A Banach space is barrelled, therefore an
ultrabornological space is barrelled. \ A nontrivial $\left( \mathcal{LF}%
\right) $-space (i.e. a nontrivial strict inductive limit of Fr\'{e}chet
spaces) is ultrabornological but not Baire (assuming that $E=\cup
_{n}E_{n},E_{n}\subsetneq E_{n+1}$ and $E_{n}$ is closed in $E,$ then $E_{n}=%
\bar{E}_{n}$ and $\mathring{E}_{n}=\varnothing $ because $E_{n}\subsetneq E$
is not a neighborhood of $0$ in $E;$ thus $E_{n}$\ is rare and $E$ is not
Baire). \ Thus an ultrabornological space need not be Baire.

\begin{claim}
(\cite{Valdivia-book}, \S I.2.2). \ Let $E$ be an LCS. \ (1) The following
conditions are equivalent:$\newline
$(a) Given any sequence $\left( A_{n}\right) $ of closed convex subsets with
empty interior, $\cup _{n}A_{n}$ has empty interior.\newline
(b) If $\left( A_{n}\right) $ is a covering of $E$ by closed convex sets,
there exists an index $n$ such that $\mathring{A}_{n}$ is nonempty.\newline
(2) The space $E$ is called \emph{convex-Baire} if these equivalent
conditions hold.
\end{claim}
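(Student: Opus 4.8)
The plan is to prove the equivalence by establishing the two implications separately; essentially all of the work sits in (b)$\,\Rightarrow\,$(a), while (a)$\,\Rightarrow\,$(b) is a one-line contrapositive. Note at the outset that the linear structure of $E$ is what distinguishes this from the ordinary Baire characterization recorded earlier: the admissible sets in (a) and (b) are required to be \emph{closed and convex}, and the argument must stay within that class.

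For (a)$\,\Rightarrow\,$(b) I would argue by contraposition. Suppose $\left( A_{n}\right) $ is a covering of $E$ by closed convex sets with $\mathring{A}_{n}=\varnothing $ for every $n$. Then (a) forces $\bigcup_{n}A_{n}$ to have empty interior; but $\bigcup_{n}A_{n}=E$, and $\mathring{E}=E\neq \varnothing $ since the space contains $0$, a contradiction. Hence some $A_{n}$ must have nonempty interior, which is (b).

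The substance is (b)$\,\Rightarrow\,$(a). Let $\left( A_{n}\right) $ be a sequence of closed convex sets with $\mathring{A}_{n}=\varnothing $, and suppose for contradiction that $\bigcup_{n}A_{n}$ has an interior point $a$. First I would use translation-invariance of the topology: replacing each $A_{n}$ by $A_{n}-a$ (still closed, convex, with empty interior), I may assume $0$ is interior to $\bigcup_{n}A_{n}$. Since $E$ is a TVS, I choose a \emph{balanced} neighborhood $U$ of $0$ with $U\subset \bigcup_{n}A_{n}$. The decisive linear input is that such a $U$ is absorbing, so $E=\bigcup_{k\geq 1}kU$. Because the homothety $x\mapsto kx$ is a homeomorphism, $U\subset \bigcup_{n}A_{n}$ yields $kU\subset \bigcup_{n}kA_{n}$ for each $k$, whence $E=\bigcup_{k}kU\subset \bigcup_{k,n}kA_{n}$. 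Reindexing the countable family $\left( kA_{n}\right) _{k,n\geq 1}$ as a single sequence produces a covering of $E$ by closed convex sets, and each member $kA_{n}$ has empty interior because $x\mapsto kx$ carries $A_{n}$ onto it. This contradicts (b), which would force one of them to have nonempty interior; therefore $\bigcup_{n}A_{n}$ has empty interior, which is (a).

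The one step demanding genuine care — and the only real obstacle — is the passage from a single interior point of $\bigcup_{n}A_{n}$ to a countable covering of the \emph{whole} space by sets that are again closed and convex with empty interior. This is precisely where the vector-space structure is used: through the absorbing, balanced neighborhood $U$ and the stability of the property ``closed, convex, empty interior'' under translations and homotheties. The remaining ingredients (the contrapositive for (a)$\,\Rightarrow\,$(b) and the reindexing of the double sequence) are routine bookkeeping.
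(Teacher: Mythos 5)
Your proof is correct. One point of comparison worth making explicit: the paper itself gives no proof of this equivalence --- it is stated as a citation of Valdivia (\emph{Topics in Locally Convex Spaces}, \S I.2.2) --- so your argument is filling a gap the paper deliberately leaves to the reference, and what you wrote is essentially Valdivia's standard argument. The two implications are handled exactly as one should: (a)$\Rightarrow$(b) is the trivial contrapositive, and (b)$\Rightarrow$(a) rests on translating an alleged interior point of $\cup_{n}A_{n}$ to the origin, picking a neighborhood $U$ of $0$ inside the union, and using that $U$ is absorbing to write $E=\cup_{k\geq 1}kU\subset \cup_{k,n}kA_{n}$, a countable covering by sets still in the admissible class. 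Two small remarks. First, the balancedness of $U$ is not strictly needed: in a TVS every neighborhood of $0$ is absorbing, and for each $x$ there is an integer $k$ with $x\in kU$; balancedness merely makes the family $\left( kU\right)_{k}$ increasing, which is a harmless convenience. Second, your closing observation can be sharpened: convexity is never used as a property in the argument --- what is used is only that the class ``closed, convex, empty interior'' is stable under translations and homotheties $x\mapsto kx$. The same proof shows the equivalence of (a) and (b) for \emph{any} class of closed subsets of a TVS stable under these affine operations; restricting to convex sets is what makes the resulting notion (convex-Baire) strictly weaker than Baire and hence the distinct hypothesis appearing in rows (6) of the paper's Closed Graph table.
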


Every convex-Baire space is barrelled (\cite{Valdivia-book}, p. 94).

The notion of webbed space is due to De Wilde \cite{De Wilde}. \ 

\begin{definition}
Let $E$ be an LCS, $\mathcal{W}=\left\{ C_{n_{1},...,n_{k}}\right\} $ be a
collection of subsets $C_{n_{1},...,n_{k}}$ of $E$ where $k$ and the $%
n_{1},...,n_{k}$ run through the natural integers. $\ \mathcal{W}$ is called
a \emph{web} if%
\begin{equation*}
E=\bigcup\nolimits_{n_{1}=1}^{\infty }C_{n_{1}},\quad
C_{n_{1},...,n_{k-1}}=\bigcup\nolimits_{n_{k}=1}^{\infty }C_{n_{1},...,n_{k}}
\end{equation*}%
and a $\mathcal{C}$\emph{-web} if in addition for every sequence $\left(
n_{k}\right) $ there exists a sequence of positive numbers $\rho _{k}$ such
that for all $\lambda _{k}$ such that $0\leq \lambda _{k}\leq \rho _{k},$
and all $x_{k}\in C_{n_{1},...,n_{k}},$ the series $\sum_{k=1}^{\infty
}\lambda _{k}x_{k}$ converges in $E.$\newline
$E$ is called a \emph{webbed space} if there exists a $\mathcal{C}$-web in $%
E $.
\end{definition}

A Fr\'{e}chet space, a sequentially complete $\left( \mathcal{DF}\right) $
space (in the sense of Grothendieck \cite{Grothendieck-EVT}), the strong or
weak dual of a countable inductive limit of metrizable locally convex
spaces, are webbed (\cite{KotheII}, \S 35.4), (\cite{De Wilde}, p. 57, Prop.
IV.3.3). \ In addition, an LCS space is Fr\'{e}chet if, and only if it is
both webbed and Baire (\cite{Jarchow}, 5.4.4).

Barrelled spaces, convex-Baire spaces, $\beta $ spaces, ultrabornological
spaces and webbed spaces enjoy the following hereditary properties:

\begin{itemize}
\item A quotient of a barrelled (resp. semi-Suslin, webbed) space is
barrelled (resp. semi-Suslin, webbed), and a Hausdorff quotient of a
convex-Baire space is convex-Baire.

\item The image of a semi-Suslin (resp. webbed) space by a continuous map is
semi-Suslin (resp. webbed) (\cite{Valdivia-book}, p. 79), (\cite{KotheII},
p. 61, (2)). \ Since a Suslin locally convex space is the continuous image
of a separable Fr\'{e}chet space, and since a Fr\'{e}chet space is webbed, a
Suslin space is webbed.

\item A product of barrelled (resp. convex-Baire) spaces is barrelled (resp.
convex-Baire). \ A countable product of ultrabornological (resp. webbed)
spaces is ultrabornological (resp. webbed). \ A finite product of
semi-Suslin spaces is semi-Suslin (\cite{Valdivia-book}, p. 80).

\item An inductive limit of barrelled (resp. $\beta $, ultrabornological)
spaces is barrelled (resp. $\beta $, ultrabornological). \ A locally convex
hull of barrelled spaces is barrelled (\cite{KotheI}, p. 368, (3)). \ An
inductive limit and a locally convex hull of countably many webbed spaces is
webbed. \ A countable inductive limit of Suslin spaces is Suslin (since it
is the countable union of continuous images of Suslin spaces).

\item A dense subspace of a convex-Baire space is convex-Baire (therefore a
convex-Baire space need not be complete).

\item A sequentially closed subspace of a semi-Suslin space is semi-Suslin (%
\cite{Valdivia-book}, p. 79).
\end{itemize}

Let $E$ be an LCS and $\mathfrak{T}$ its topology. \ Then $\mathfrak{T}^{f}$
is defined to be the finest topology on $E^{\prime }$ which coincides with
the topology induced by the weak topology $\sigma \left( E^{\prime
},E\right) $ on the $\mathfrak{T}$-equicontinuous subsets of $E^{\prime }.$
\ Therefore, a linear subspace $Q\subset E^{\prime }$ is $\mathfrak{T}^{f}$%
-closed if, and only if for each equicontinuous subset $A\subset E^{\prime
}, $ $Q\cap A$ is $\sigma \left( E^{\prime },E\right) $-closed in $A$
(equivalently, $Q$ is $\mathfrak{T}^{f}$-closed if, and only if for every
neighborhood $U$ of $0$ in $E,$ $Q\cap U^{0}$ is $\sigma \left( E^{\prime
},E\right) $-closed in $U^{0}$, where $U^{0}$ is the polar of $U$).

\begin{definition}
An LCS $E$ is called a \emph{Pt\'{a}k space} (or a $B$\emph{-complete space}%
) if every $\mathfrak{T}^{f}$-closed linear subspace of $E^{\prime }$ is $%
\sigma \left( E^{\prime },E\right) $-closed. \ An LCS $E$ is called an \emph{%
infra-Pt\'{a}k space} (or a $B_{r}$\emph{-complete space}) if every weakly
dense $\mathfrak{T}^{f}$-closed linear subspace of $E^{\prime }$ coincides
with $E^{\prime }.$
\end{definition}

A Pt\'{a}k space is infra-Pt\'{a}k space (\cite{Ptak}, (4.2)) but not
conversely \cite{Valdivia-84}, and an infra-Pt\'{a}k space is complete (\cite%
{KotheII}, \S 34.2).

A closed subspace of an infra-Pt\'{a}k (resp. a Pt\'{a}k) space is again
infra-Pt\'{a}k (resp. Pt\'{a}k) (\cite{Ptak}, (3.9); \cite{Horvath}, \S %
3.17).

A Fr\'{e}chet space, the strong dual of a reflexive Fr\'{e}chet space, and
the Hausdorff quotient of a Pt\'{a}k space,\ are all Pt\'{a}k spaces (\cite%
{Horvath}, \S 3.17). \ A product of two Pt\'{a}k spaces need not be Pt\'{a}k.

A separable Fr\'{e}chet space is Polish, thus $K$-Suslin, and Baire, hence $%
\beta $. \ Conversely, a locally convex space which is Baire and $K$-Suslin
is separable Fr\'{e}chet (\cite{Valdivia-book}, p. 64, (21)). \ If $E$ is a
Fr\'{e}chet space, its weak dual is $K$-Suslin (but not separable in
general, thus not Suslin), and $E^{\prime \prime }\left[ \sigma \left(
E^{\prime \prime },E^{\prime }\right) \right] $ is quasi-Suslin; if $E$ is
reflexive, $\left( E,\sigma \left( E,E^{\prime }\right) \right) $ is $K$%
-Suslin (\cite{Treves}, p. 557), (\cite{Valdivia-book}, p. 66, (24)).

The above is summarized below, where $\left( \mathcal{F}\right) $ means Fr%
\'{e}chet, $\left( s\mathcal{F}\right) $ means separable Fr\'{e}chet, $%
\left( c\mathcal{LF}\right) $ means a countable inductive limit (non
necessarily strict) of Fr\'{e}chet spaces, $\left( c\mathcal{L}s\mathcal{F}%
\right) $ means countable inductive limit of separable Fr\'{e}chet spaces,
and l.c.h. means locally convex hull:%
\begin{equation*}
\begin{array}{ccccc}
\text{quasi-Suslin locally complete} &  & \text{Baire }K\text{-Suslin} & 
\Leftrightarrow & \left( s\mathcal{F}\right) \\ 
\Downarrow &  & \Downarrow &  & \Downarrow \\ 
\text{semi-Suslin} & \Leftarrow & \left( \mathcal{F}\right) &  & \left( c%
\mathcal{L}s\mathcal{F}\right) \Rightarrow K\beta \Rightarrow \beta \\ 
&  & \Updownarrow &  & \Downarrow \\ 
\text{Pt\'{a}k} & \Leftarrow & \left( \mathcal{F}\right) & \Rightarrow & 
\text{ultrabornological} \\ 
\Downarrow &  & \Downarrow &  & \Downarrow \\ 
\text{infra-Pt\'{a}k} &  & \text{Baire} & \Rightarrow & \text{l.c.h. Baire}
\\ 
\Downarrow &  & \Downarrow &  & \Downarrow \\ 
\text{complete} &  & \text{convex-Baire} & \Rightarrow & \text{barrelled}%
\end{array}%
\end{equation*}

and%
\begin{equation*}
\begin{tabular}{lllll}
$\left( s\mathcal{F}\right) $ & $\Rightarrow $ & $\left( c\mathcal{L}s%
\mathcal{F}\right) $ & $\Rightarrow $ & Suslin \\ 
$\Downarrow $ &  &  &  & $\Downarrow $ \\ 
$\left( \mathcal{F}\right) $ & $\Leftrightarrow $ & webbed Baire & $%
\Leftarrow $ & webbed \\ 
$\Downarrow $ &  &  &  &  \\ 
ultrabornological & $\Rightarrow $ & l.c.h. metrizable Baire & $\Leftarrow $
& Baire \\ 
&  & $\Downarrow $ &  &  \\ 
&  & l.c.h. Baire & $\Rightarrow $ & barrelled%
\end{tabular}%
\end{equation*}

\section{Theorems in topological groups\label{Sect-thm-top-groups}}

Let us review three important versions of the (generalized) Closed Graph
Theorem and of the Open Mapping Theorem for topological groups. \ Let $G,H$
be Hausdorff topological groups and $u:G\rightarrow H$ be a map. \ The table
below summarizes the results. \ The first column is the statement number,
the last column contains the reference. \ In the other columns, the
conditions on $G,$ $H$ and $\limfunc{Gr}\left( u\right) $ under which $u$ is
a morphism are indicated.

\subsection{Closed Graph Theorems\label{subsect-closed-graph-groups}}

Let $u:G\rightarrow H$ be a map. \ Then $u$ is continuous under any of the
conditions on $G,H$ and $\limfunc{Gr}\left( u\right) $ in the table below:

\begin{equation*}
\begin{tabular}{|l|l|l|l|l|}
\hline\hline
& $G$ & $H$ & $\limfunc{Gr}\left( u\right) $ & Ref. \\ \hline\hline
(1) & Baire &  & Suslin & \cite{Bourbaki-TG-II} \\ \hline\hline
(2) & $\beta $ & Suslin & Borel & \cite{Martineau-Studia} \\ \hline\hline
(3) & $K.\beta $ & $K$-Suslin & $F$-Borel & \cite{Martineau-Studia} \\ 
\hline\hline
(4) & 2nd category & metrizable, complete, Lindel\"{o}f & closed & \cite%
{Wilhelm} \\ \hline\hline
(5) & Suslin, Baire & Suslin & Borel &  \\ \hline\hline
(6) & Polish & Polish & closed & \cite{Hofmann} \\ \hline
\end{tabular}%
\end{equation*}

\begin{remark}
(A). \ The main statements are (1)-(4). \ Indeed,\newline
(1)$\Rightarrow $(5): Let $G$ and $H$ be Suslin. \ Then $G\times H$ is
Suslin. \ If $\limfunc{Gr}\left( u\right) $ is Borel in $G\times H,$ then $%
\limfunc{Gr}\left( u\right) $ is Suslin. \ If in addition $G$ is Baire, by
(1) $u$ is continuous.\newline
(2)$\Rightarrow $(5)$\Rightarrow $(6). If $G$ is Suslin Baire, it is $\beta
, $ thus (2)$\Rightarrow $(5). \ If $G$ and $H$ are Polish, they are Suslin
Baire, and if $\limfunc{Gr}\left( u\right) $ is closed, it is Borel, thus (5)%
$\Rightarrow $(6).\newline
(3)$\Rightarrow $(6): If $G$ and $F$\ are Polish, they are respectively $%
K.\beta $ and $K$-Suslin, and if $\limfunc{Gr}\left( u\right) $ is closed,
it is $F$-Borel. \ Therefore by (3) $u$ is continuous.\newline
(4)$\Rightarrow $(6): Let $G$ be Polish, hence Baire, hence 2nd category. \
Let $G$ be Polish, hence metrizable, complete and Lindel\"{o}f. \ By (5), if 
$\limfunc{Gr}\left( u\right) $ is closed, $u$ is continuous.\newline
(B). \ Let $E,F$ be two metrizable, complete \emph{and separable} TVS's over
a non-discrete valued division ring and $u:E\rightarrow F$ be linear with
closed graph. \ By (6), $u$ is continuous. \ In Banach's Closed Graph
Theorem, $E$ and $F$ \emph{are not assumed to be separable} thus cannot be
deduced of any of the above statements which, apart from (3), all imply a
separability condition. In addition, metrizable TVS is not $K$-Suslin in
general.
\end{remark}

\subsection{Open Mapping Theorems}

Let $u:H\rightarrow G$ be a surjective map where $G,H$ are Hausdorff
topological groups. \ Then $u$ is open under any of the following conditions
on $H$ and $G:$

\begin{equation*}
\begin{tabular}{|l|l|l|l|}
\hline\hline
& $G$ & $H$ & Ref. \\ \hline\hline
(2') & $\beta $ & Suslin & \cite{Martineau-Studia} \\ \hline\hline
(3') & $K.\beta $ & $K$-Suslin & \cite{Martineau-Studia} \\ \hline\hline
(4') & 2nd category & metrizable, complete, Lindel\"{o}f &  \\ \hline\hline
(6') & inexhaustible & Polish & \cite{Hofmann} \\ \hline
\end{tabular}%
\end{equation*}

\begin{remark}
(2'), (3'), (4') are consequences of (2), (3), (4) above and of Theorem \ref%
{Main thm}.\newline
(4') is not stated in \cite{Wilhelm}.\newline
(4')$\Rightarrow $(6'): If $G$ is inexhaustible, it is of 2nd category, and
if $H$ is Polish, it is metrizable, complete and Lindel\"{o}f.
\end{remark}

\section{Theorems in locally convex spaces\label{sect-thm-LCS}}

\subsection{Closed Graph Theorems\label{subsect-closed-graph-LCS}}

The Closed Graph Theorem was generalized by Dieudonn\'{e}-Schwartz \cite%
{Dieudonne-Schwartz}, K\"{o}the, Grothendieck (\cite{Grothendieck-PTT},
Introduction, IV, Th\'{e}or\`{e}me B) and then De Wilde (\cite{KotheII}, p.
57):

\begin{theorem}
\label{th-Bourbaki}Let $E$ be ultrabornological, $F$ be webbed, and the
graph of $u:E\rightarrow F$ be sequentially closed. \ Then $u$ is continuous.
\end{theorem}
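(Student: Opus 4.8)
The plan is to reduce the statement to the case of a Banach source and then to extract a suitable strand from a web of $F$ by a Baire-category argument. Since $E$ is ultrabornological it is, by the very definition adopted above, an inductive limit $E=\lim\limits_{\longrightarrow}B_{\alpha}$ of Banach spaces, with canonical maps $\varphi_{\alpha}:B_{\alpha}\rightarrow E$. By the universal property of the inductive limit in $\mathbf{LCS}$, the map $u$ is continuous if and only if each composite $u\circ\varphi_{\alpha}:B_{\alpha}\rightarrow F$ is continuous. Moreover, if $(x_{n})$ is a sequence in $B_{\alpha}$ with $x_{n}\rightarrow x$ and $u(\varphi_{\alpha}(x_{n}))\rightarrow y$, then $\varphi_{\alpha}(x_{n})\rightarrow\varphi_{\alpha}(x)$ by continuity of $\varphi_{\alpha}$, so the sequential closedness of $\limfunc{Gr}(u)$ forces $y=u(\varphi_{\alpha}(x))$; hence every $u\circ\varphi_{\alpha}$ again has sequentially closed graph. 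It therefore suffices to prove the theorem when the source is a Banach space $B$, which in particular is a Baire space.

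Next I would fix a $\mathcal{C}$-web $\{C_{n_{1},\ldots,n_{k}}\}$ of $F$, which after the usual normalisation may be taken to consist of absolutely convex sets with $F=\bigcup_{n_{1}}C_{n_{1}}$ and $C_{n_{1},\ldots,n_{k-1}}=\bigcup_{n_{k}}C_{n_{1},\ldots,n_{k}}$. The identity $B=\bigcup_{n_{1}}u^{-1}(C_{n_{1}})$ together with the Baire property yields an index $n_{1}$ for which $u^{-1}(C_{n_{1}})$ is non-meagre, so its closure (absolutely convex, since $u$ is linear) has nonempty interior and hence is a neighbourhood of $0$. Iterating the same selection inside each $C_{n_{1},\ldots,n_{k}}=\bigcup_{n_{k+1}}C_{n_{1},\ldots,n_{k+1}}$ produces a strand $(n_{k})_{k\geq1}$ for which every set $W_{k}:=\overline{u^{-1}(C_{n_{1},\ldots,n_{k}})}$ is a neighbourhood of $0$ in $B$.

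The decisive step is an open-mapping-type successive approximation establishing the inclusion $W_{k}\subset u^{-1}(D_{k})$, where $D_{k}$ is the bounded absolutely convex set $\{\sum_{j\geq k}\lambda_{j}x_{j}:x_{j}\in C_{n_{1},\ldots,n_{j}},\,0\leq\lambda_{j}\leq\rho_{j}\}$ and $(\rho_{j})$ are the scalars furnished by the $\mathcal{C}$-web for the chosen strand. Given $x\in W_{k}$, I would build inductively elements $t_{j}\in u^{-1}(\lambda_{j}C_{n_{1},\ldots,n_{j}})$ by choosing $t_{j}$ in $u^{-1}(\lambda_{j}C_{n_{1},\ldots,n_{j}})$ so close to the current residual that $x-\sum_{i\leq j}t_{i}$ lies simultaneously in a small norm-ball of $B$ and in $\rho_{j+1}W_{j+1}$ (possible since $x$ lies in the relevant closure and the intersection of two neighbourhoods is a neighbourhood). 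The norm constraint makes the partial sums $\sigma_{m}=\sum_{j}t_{j}$ converge to $x$ in $B$, while the images $u(t_{j})\in\lambda_{j}C_{n_{1},\ldots,n_{j}}$ make $\sum_{j}u(t_{j})$ converge in $F$ to some $y\in D_{k}$ by the $\mathcal{C}$-web property. Since $\sigma_{m}\rightarrow x$ and $u(\sigma_{m})\rightarrow y$, the sequentially closed graph gives $y=u(x)$, whence $u(x)\in D_{k}$. As the $D_{k}$ can be absorbed by any prescribed neighbourhood of $0$ in $F$ for $k$ large, and each $W_{k}$ is a neighbourhood of $0$, this proves $u^{-1}(V)$ is a neighbourhood of $0$ for every neighbourhood $V$, i.e. continuity at $0$, hence everywhere by linearity.

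I expect the successive-approximation construction to be the main obstacle: one must juggle two convergence requirements at once — a norm-convergent series in $B$ and a $\mathcal{C}$-web-convergent image series in $F$ — by imposing the double constraint on each $t_{j}$, keeping the selected scalars within the bounds $\rho_{j}$ dictated by the web while forcing the residuals into shrinking neighbourhoods. Balancing these, and then invoking the merely sequential (rather than full topological) closedness of the graph to identify the limit $y$ with $u(x)$, is precisely the technical heart of De Wilde's theorem; by contrast, the reduction to a Banach source and the Baire selection of the strand are comparatively routine.
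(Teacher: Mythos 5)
You should first note that the paper contains no proof of Theorem \ref{th-Bourbaki}: it is one of the results stated with references only (\cite{KotheII}, p.~57; \cite{De Wilde}), so there is no ``paper proof'' to match. Your sketch is precisely the standard De Wilde argument given in those references --- reduction to a Banach source via the universal property of the locally convex inductive limit (your check that each $u\circ \varphi _{\alpha }$ inherits a sequentially closed graph is correct, and sequential closedness suffices throughout because the source is metrizable), Baire selection of a strand, and the successive-approximation step, which you carry out correctly: the double constraint putting each residual in a shrinking ball intersected with the next closure $\rho _{j+1}W_{j+1}$ is exactly the right move, and identifying $y=u(x)$ from $\sigma _{m}\rightarrow x$, $u(\sigma _{m})\rightarrow y$ uses only the sequentially closed graph, as required.

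Three points need repair or justification before the sketch is complete. First, the paper's definition of a web does not assume the sets $C_{n_{1},\dots ,n_{k}}$ absolutely convex, and the Baire step (the closure of a non-meagre absolutely convex set is a $0$-neighbourhood) needs that convexity; the ``usual normalisation'' you invoke is a genuine lemma of De Wilde, since replacing the web sets by their absolutely convex hulls does not \emph{obviously} preserve the $\mathcal{C}$-web convergence property. Second, a scaling slip: from $x\in W_{k}=\overline{u^{-1}(C_{n_{1},\dots ,n_{k}})}$ you can only approximate by elements of $u^{-1}(C_{n_{1},\dots ,n_{k}})$, whose images carry coefficient $1$, which may exceed $\rho _{k}$; the correct inclusion is $\rho _{k}W_{k}\subset u^{-1}(D_{k})$, obtained by starting from $x\in \overline{u^{-1}(\rho _{k}C_{n_{1},\dots ,n_{k}})}$, which serves equally well since $\rho _{k}W_{k}$ is still a $0$-neighbourhood. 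Third --- the one substantive omission --- your closing assertion that for every ($w.l.o.g.$ closed) $0$-neighbourhood $V$ of $F$ there is $k$ with $D_{k}\subset V$ is just as essential as the successive approximation and is not obvious; note also that $D_{k}$ need not be bounded (harmless, since boundedness is never used). The claim is true and can be proved by contradiction: if $y_{k}\in D_{k}\setminus V$ for every $k$, then since $V$ is closed one may pick finite partial sums $s_{i}=\sum_{j=k_{i}}^{m_{k_{i}}}\lambda _{j}^{(i)}x_{j}^{(i)}\notin V$ supported on disjoint blocks $k_{i+1}>m_{k_{i}}$ of the same strand, and concatenating them (coefficients $0$ off the blocks) yields a single admissible series; its convergence makes the partial sums Cauchy, so the block sums $s_{i}$ tend to $0$, contradicting $s_{i}\notin V$. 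With these repairs your outline is a complete and correct proof of the theorem as stated.
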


\begin{remark}
The above result is completely symmetric when both $E$ and $F$ are countable
inductive limits of Fr\'{e}chet spaces (the case considered by K\"{o}the)
and \emph{a fortiori} when they are strict countable inductive limits of Fr%
\'{e}chet spaces (the case considered by Dieudonn\'{e}-Schwartz).
\end{remark}

Pt\'{a}k's Closed Graph Theorem (\cite{Ptak}, Theorem 4.8), proved in 1953,
and its generalization obtained in 1956\ by A. and W. Robertson (\cite%
{KotheII}, p. 41) can be stated as follows:

\begin{theorem}
\label{th-Ptak-1}Let $E$ be barrelled (resp. a locally convex hull of Baire
spaces) and $F$ be an infra-Pt\'{a}k space (resp. a locally convex hull of a
sequence of Pt\'{a}k spaces). \ A linear map $u:E\rightarrow F$ is
continuous if its graph is closed.
\end{theorem}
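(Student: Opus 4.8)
The plan is to reach continuity of $u$ by establishing two weaker properties and combining them: that $u$ is \emph{weakly} continuous (continuous $(E,\sigma(E,E^{\prime }))\to (F,\sigma(F,F^{\prime }))$) and that $u$ is nearly continuous. The whole argument is organized around the transpose domain
\begin{equation*}
D=\left\{ y^{\prime }\in F^{\prime }:y^{\prime }\circ u\in E^{\prime }\right\},
\end{equation*}
a linear subspace of $F^{\prime }$; weak continuity of $u$ is precisely the assertion $D=F^{\prime }$, and the infra-Pt\'{a}k hypothesis is tailor-made to deliver it. First I would note that barrelledness gives near continuity: for every closed convex balanced neighbourhood $V$ of $0$ in $F$, the set $u^{-1}(V)$ is convex, balanced and absorbing, so its closure $\overline{u^{-1}(V)}$ is a barrel, hence a neighbourhood of $0$. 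Granting $D=F^{\prime }$, the conclusion is then immediate: weak continuity makes the convex set $u^{-1}(V)$ $\sigma(E,E^{\prime })$-closed, hence closed in the original topology, so $\overline{u^{-1}(V)}=u^{-1}(V)$ is a neighbourhood of $0$; as such $V$ form a base at $0$ in $F$, $u$ is continuous. Everything thus reduces to showing $D=F^{\prime }$, and by the definition of an infra-Pt\'{a}k space it suffices to prove that $D$ is $\sigma(F^{\prime },F)$-dense and $\mathfrak{T}^{f}$-closed in $F^{\prime }$.

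For density I would argue by separation, and this is where the closed graph enters. If $D$ were not $\sigma(F^{\prime },F)$-dense there would be some $y_{0}\neq 0$ in $F$ with $\langle y^{\prime },y_{0}\rangle =0$ for all $y^{\prime }\in D$. Since $(0,y_{0})\notin \limfunc{Gr}\left( u\right)$ and $\limfunc{Gr}\left( u\right) $ is a closed linear subspace of $E\times F$, Hahn--Banach furnishes $(x^{\prime },y^{\prime })\in E^{\prime }\times F^{\prime }$ vanishing on the graph with $\langle y^{\prime },y_{0}\rangle \neq 0$. Vanishing on the graph reads $\langle x^{\prime },x\rangle +\langle y^{\prime },u(x)\rangle =0$ for all $x$, i.e.\ $y^{\prime }\circ u=-x^{\prime }\in E^{\prime }$, so $y^{\prime }\in D$ and therefore $\langle y^{\prime },y_{0}\rangle =0$, a contradiction.

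The hard part will be the $\mathfrak{T}^{f}$-closedness of $D$, and this is where near continuity is spent. By the characterization recalled in the text I must show $D\cap V^{0}$ is $\sigma(F^{\prime },F)$-closed in $V^{0}$ for every closed convex balanced neighbourhood $V$ of $0$. Let $y_{\alpha }^{\prime }\in D\cap V^{0}$ converge weakly to $y^{\prime }\in V^{0}$. For $x\in u^{-1}(V)$ one has $|\langle y_{\alpha }^{\prime }\circ u,x\rangle |=|\langle y_{\alpha }^{\prime },u(x)\rangle |\leq 1$, and since $y_{\alpha }^{\prime }\circ u\in E^{\prime }$ this bound passes to $N:=\overline{u^{-1}(V)}$, whence $y_{\alpha }^{\prime }\circ u\in N^{0}$. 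By near continuity $N$ is a neighbourhood of $0$, so $N^{0}$ is $\sigma(E^{\prime },E)$-compact by Banach--Alaoglu. Now $y_{\alpha }^{\prime }\circ u$ converges pointwise on $E$ to the functional $y^{\prime }\circ u$ of the algebraic dual $E^{\ast }$; staying inside the compact, hence $\sigma(E^{\ast },E)$-closed, set $N^{0}\subset E^{\prime }$, the limit satisfies $y^{\prime }\circ u\in N^{0}\subset E^{\prime }$, so $y^{\prime }\in D$. As the $V^{0}$ form a fundamental system of equicontinuous subsets of $F^{\prime }$, $D$ is $\mathfrak{T}^{f}$-closed, and the infra-Pt\'{a}k hypothesis yields $D=F^{\prime }$, finishing the first assertion.

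Finally, the two ``resp.'' variants. When $E$ is a locally convex hull of Baire spaces it is barrelled (a Baire LCS is barrelled, and a locally convex hull of barrelled spaces is barrelled), so that case is subsumed without change. When $F$ is a locally convex hull of a sequence $(F_{n})$ of Pt\'{a}k spaces, the density and $\mathfrak{T}^{f}$-closedness arguments above carry over verbatim, since they use only barrelledness of $E$ and the closed graph; what must be replaced is the final appeal to infra-Pt\'{a}k, which I would substitute by a reduction to the summands $F_{n}$, exploiting that each $F_{n}$ is \emph{Pt\'{a}k} ($B$-complete) and not merely infra-Pt\'{a}k. This is the Robertson--Robertson refinement, and it is precisely the step at which $B$-completeness of the pieces, rather than $B_{r}$-completeness, is indispensable.
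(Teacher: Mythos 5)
Your proof of the first alternative ($E$ barrelled, $F$ infra-Pt\'ak) is complete and correct. Since the paper states Theorem \ref{th-Ptak-1} without proof, citing (\cite{Ptak}, Thm. 4.8) and (\cite{KotheII}, p. 41), the comparison is with those classical arguments, and what you give is exactly Pt\'ak's duality proof: barrelledness yields near continuity; the closed graph yields $\sigma(F^{\prime},F)$-density of $D=\{y^{\prime}\in F^{\prime}:y^{\prime}\circ u\in E^{\prime}\}$ by Hahn--Banach separation of $(0,y_{0})$ from the closed subspace $\limfunc{Gr}(u)$; near continuity plus Alaoglu yields $\mathfrak{T}^{f}$-closedness of $D$ (your passage of the bound from $u^{-1}(V)$ to $N=\overline{u^{-1}(V)}$ via continuity of each $y_{\alpha}^{\prime}\circ u$, and the pointwise-limit argument inside the compact polar $N^{0}$, are both sound); $B_{r}$-completeness then gives $D=F^{\prime}$, i.e.\ weak continuity, which together with barrelledness upgrades to continuity since closed convex balanced $V$ pull back to $\sigma(E,E^{\prime})$-closed barrels. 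All steps check.

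The ``resp.''\ half (the Robertson--Robertson refinement), however, is not proved: you name ``a reduction to the summands $F_{n}$'' without performing it, and the two remarks you make about it point the wrong way. First, there is nothing local to ``replace'': if you keep the dual scheme, density and $\mathfrak{T}^{f}$-closedness of $D$ yield $D=F^{\prime}$ only when $F$ itself is $B_{r}$-complete, and a locally convex hull of a sequence of Pt\'ak spaces need not be $B_{r}$-complete (typical $(\mathcal{LF})$-spaces are not), so the global dual argument must be abandoned, not patched at its last line. Second, your reduction of the hypothesis on $E$ to mere barrelledness discards exactly what the real proof consumes: one first reduces to $E$ Baire (composing $u$ with the hull maps $E_{\alpha}\rightarrow E$ preserves the closed graph), writes $F=\bigcup_{n}H_{n}$ with $H_{n}=\varphi_{1}(F_{1})+\cdots+\varphi_{n}(F_{n})$, uses Baire category to find $n$ with $u^{-1}(H_{n})$ of second category in $E$, and only then applies Pt\'ak-type machinery to the pieces $F_{1},\dots,F_{n}$; it is in this reduction, which passes through quotients, that $B$-completeness of the summands is indispensable, since Hausdorff quotients of Pt\'ak spaces are Pt\'ak whereas quotients of infra-Pt\'ak spaces need not be infra-Pt\'ak (as the paper notes in its Open Mapping section). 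Barrelledness alone cannot run this category argument, which is why the table in Section \ref{subsect-closed-graph-LCS} keeps ``l.c.h. Baire'' opposite ``l.c.h. Pt\'ak'' in row (3) rather than weakening it to ``barrelled''. In short: the Pt\'ak case is fully established; the Robertson--Robertson case remains a plan, and the one concrete reduction you propose for it (to barrelledness of $E$) would make that plan unworkable.
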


Schwartz's Borel graph theorem can be stated as follows (\cite{De Wilde}, p.
136, Prop. VII.4.1):

\begin{theorem}
Let $E$ be \emph{ultrabornological}, $F$ be \emph{Suslin}, and the graph of $%
u:E\rightarrow F$ is be a sq-Borel subset of $E\times F$. Then $u$ is
continuous.
\end{theorem}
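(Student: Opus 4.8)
The plan is to reduce the statement to the case where the domain is a separable Banach space, where the product $E\times F$ becomes Suslin, and then to invoke De Wilde's closed graph theorem (Theorem \ref{th-Bourbaki}) applied not to $u$ itself but to the inverse of the first projection of the graph. First I would peel off the ultrabornological structure. Writing $E=\lim\limits_{\longrightarrow}E_{i}$ as an inductive limit of Banach spaces with canonical maps $\iota_{i}:E_{i}\rightarrow E$, the universal property of the final topology shows that $u$ is continuous if, and only if each composite $u\circ\iota_{i}:E_{i}\rightarrow F$ is continuous. Since $\limfunc{Gr}(u\circ\iota_{i})=(\iota_{i}\times id_{F})^{-1}(\limfunc{Gr}(u))$ and the preimage of a sq-Borel set under a continuous (hence sequentially continuous) map is again sq-Borel, each $u\circ\iota_{i}$ has sq-Borel graph. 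This reduces the statement to $E$ Banach. As $E$ is then metrizable, $u$ is continuous if, and only if it is sequentially continuous, and every null sequence lies in a separable closed subspace $E_{0}$; upon restriction $\limfunc{Gr}(u\vert E_{0})=\limfunc{Gr}(u)\cap(E_{0}\times F)$ is still sq-Borel. Hence it suffices to treat $E$ separable Banach, i.e. Polish.

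With $E$ Polish and $F$ Suslin, the product $E\times F$ is Suslin, since a finite product of Suslin spaces is Suslin and a Polish space is Suslin. The graph $G:=\limfunc{Gr}(u)$, being sq-Borel in the Suslin space $E\times F$, is therefore Suslin, and a Suslin locally convex space is webbed. Now the two projections $p:G\rightarrow E:(x,u(x))\mapsto x$ and $q:G\rightarrow F:(x,u(x))\mapsto u(x)$ are continuous, $p$ is a linear bijection, and $u=q\circ p^{-1}$. The graph of $p^{-1}:E\rightarrow G$, viewed in $E\times G\subset E\times(E\times F)$, is sequentially closed: if $x_{n}\rightarrow x$ in $E$ and $(x_{n},u(x_{n}))\rightarrow(x',y)\in G$, then $x'=x$ by Hausdorffness and $y=u(x')=u(x)$, so the limit lies in the graph. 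Since $E$ is Banach, hence ultrabornological, and $G$ is webbed, Theorem \ref{th-Bourbaki} yields that $p^{-1}$ is continuous; consequently $u=q\circ p^{-1}$ is continuous.

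The delicate point is the first reduction. The heredity ``a sq-Borel subspace of a Suslin space is Suslin'' is only available once $E\times F$ is itself Suslin, and this fails for a general ultrabornological domain, for instance a nonseparable Banach space is not Suslin (it is not even Lindel\"{o}f). The whole argument therefore hinges on exploiting the inductive-limit description of ultrabornological spaces together with metrizability and sequential continuity in order to descend to a separable, hence Polish, domain. Once this descent is achieved, the Suslin heredity properties and De Wilde's theorem carry out the remainder essentially formally, the only genuine verification being the sequential closedness of the graph of $p^{-1}$.
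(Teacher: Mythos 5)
Your proof is correct, but there is nothing in the paper to compare it against line by line: this theorem is one of those the paper states ``without proof but with the detailed reference'' (De Wilde, Prop.\ VII.4.1), so your argument should be measured against the cited literature, and there it is in fact the standard route. Every ingredient you use is already catalogued in the paper: the reduction across the inductive limit is the universal property of the final locally convex topology; the descent to a closed separable subspace works because a Banach space is metrizable, so sequential continuity suffices; the stability of sq-Borel sets under preimages by (sequentially) continuous maps is the elementary $\sigma $-algebra argument (the family of sets whose preimage is sq-Borel is a $\sigma $-algebra containing the sequentially closed sets); and the hereditary facts of Section \ref{Sect-typology} (a countable product of Suslin spaces is Suslin, sq-Borel subspaces of Suslin spaces are Suslin, a Suslin LCS is webbed) then place the graph $G$ within the scope of Theorem \ref{th-Bourbaki}. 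Your final device --- applying the closed graph theorem to $p^{-1}$ rather than an open mapping theorem to $p$ --- is precisely the mechanism of Theorem \ref{Main thm}(i) run in reverse, and your verification that $\limfunc{Gr}\left( p^{-1}\right) $ is sequentially closed (it is in fact closed, by the same argument with nets) is sound. Your closing diagnosis is also the right one: the separability descent is unavoidable, since for nonseparable $E$ the product $E\times F$ is not Suslin and the sq-Borel hypothesis cannot be upgraded to ``Suslin'', hence to ``webbed''.

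Two points deserve to be made explicit rather than tacit. First, linearity of $u$ is nowhere stated in the theorem but is used essentially: in the universal-property reduction (continuity of a map out of an inductive limit is equivalent to continuity of the composites only for \emph{linear} maps), in taking $E_{0}$ to be the closed linear span of the sequence, and in $G=\limfunc{Gr}\left( u\right) $ being a linear subspace, hence an LCS, of $E\times F$; linearity is implicit in the LCS setting of Section \ref{sect-thm-LCS}, but your write-up should say so. Second, the step ``Suslin $\Rightarrow $ webbed'' rests on the nontrivial fact that a Suslin LCS is the continuous linear image of a separable Fr\'{e}chet space, which the paper records without proof; you should cite it (\cite{De Wilde}) rather than treat it as formal. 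With these two provisos, the proposal is a complete and correct proof, assembled exactly as in the source the paper refers to.
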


In Schwartz's Borel graph theorem, $F$ is necessarily separable. \
Martineau's and Valdivia'\ generalizations do not require this restriction.
\ Martineau's result (\cite{Martineau-Studia}, Thm. 6) involves $K$%
-ultrabornological spaces which are locally convex hulls of Baire spaces of
a particular type, so this theorem is a consequence of Valdivia's result (3)
in the table below.

\begin{theorem}
Let $E,F$ be two LCS's and $u:E\rightarrow F$. \ Then $u$ is continuous if
one of the following conditions hold:\newline
$\bullet $ $E$ is the locally convex hull of Baire spaces (this happens if $%
E $ is $K$-ultrabornological), $F$ is $K$-Suslin, $u$ has closed graph (\cite%
{Valdivia-book}, p. 58, (11)).\newline
$\bullet $ $E$ is the locally convex hull of metrizable Baire spaces, $F$ is
quasi-Suslin, $u$ has closed graph (\cite{Valdivia-book}, p. 63, (17)).%
\newline
$\bullet $ $E$ is the locally convex hull of metrizable Baire (resp.
metrizable convex-Baire) spaces, $E$ is Suslin (resp. semi-Suslin), $u$ has
sequentially closed graph (\cite{Valdivia-book}, p. 71, (15) (resp. p. 84,
(13))).
\end{theorem}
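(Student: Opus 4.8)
The plan is to treat all three items by a single two-stage scheme: first reduce, via the universal property of the locally convex hull, to a single Baire (resp.\ metrizable Baire, metrizable convex-Baire) domain, and then prove the resulting ``core'' Closed Graph Theorem between a Baire space and a Suslin-type target by a Baire-category argument producing near-continuity, followed by a closure-removal step that consumes the closed graph. (Throughout I read the third item with the \emph{target} $F$, not $E$, being Suslin resp.\ semi-Suslin, as the role of $u$ demands.)

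First I would carry out the reduction. Writing $E=\sum_{\alpha}\varphi_{\alpha}(F_{\alpha})$ as a locally convex hull, the hull topology is the finest locally convex topology rendering every $\varphi_{\alpha}$ continuous; hence, exactly as for inductive limits, a linear map $u\colon E\rightarrow F$ is continuous if and only if each composite $u\circ\varphi_{\alpha}\colon F_{\alpha}\rightarrow F$ is continuous. Moreover $\limfunc{Gr}(u\circ\varphi_{\alpha})=(\varphi_{\alpha}\times\mathrm{id}_{F})^{-1}(\limfunc{Gr}(u))$, so continuity of $\varphi_{\alpha}$ transports the (sequentially) closed-graph hypothesis from $u$ to each $u\circ\varphi_{\alpha}$. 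Thus it suffices to prove, for each $\alpha$, that if $G:=F_{\alpha}$ is Baire (resp.\ metrizable Baire, metrizable convex-Baire) and $v:=u\circ\varphi_{\alpha}\colon G\rightarrow F$ has (sequentially) closed graph, then $v$ is continuous.

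Next I would derive near-continuity of $v$ from the covering structure of the target. In the $K$-Suslin case, Definition \ref{def-K-Suslin-quasi-Suslin} furnishes a Polish $P$ and a semicontinuous compact-valued $T$ with $F=\bigcup_{p}T(p)$. Fixing a Souslin scheme for $P$ (a continuous surjection $\mathbb{N}^{\mathbb{N}}\twoheadrightarrow P$ together with basic closed sets $P_{s}$, $s\in\mathbb{N}^{<\mathbb{N}}$), I would set $A_{s}=\bigcup_{p\in P_{s}}T(p)$, so that $F=\bigcup_{n}A_{(n)}$ and $A_{s}=\bigcup_{n}A_{s^\frown n}$. Pulling back by $v^{-1}$ and using that $G$ is Baire, at each level one of the sets $v^{-1}(A_{s^\frown n})$ is non-meagre, so its closure has nonempty interior; iterating produces a branch $\sigma$, hence a point $p_{0}=\bigcap_{k}P_{\sigma\restriction k}\in P$ with $T(p_{0})$ compact, and the nested interiors show that $v^{-1}(W)$ lies in the interior of its closure for every neighbourhood $W$ of $0$ in $F$, i.e.\ $v$ is nearly continuous in the sense of the Definition above. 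In the quasi-Suslin and semi-Suslin items the same scheme applies with $T$ now $\mathfrak{P}(F)$-valued (resp.\ $\mathcal{S}(F)$-valued), the metrizability of $G$ letting one work with sequences and the convex-Baire property matching the convex CS-compact sets.

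The main obstacle is the final closure-removal step, i.e.\ upgrading near-continuity to genuine continuity, which is where the closed graph is used. Given a closed neighbourhood $W$ of $0$ in $F$, near-continuity yields a neighbourhood $U$ of $0$ in $G$ with $U\subset\overline{v^{-1}(W)}$; I would show $v(U)\subset W$. For $x\in U$ choose a net (resp., by metrizability of $G$, a sequence) in $v^{-1}(W)$ converging to $x$; by semicontinuity of $T$ its images are confined to arbitrarily small neighbourhoods of the compact set $T(p_{0})$ (resp.\ possess a cluster point by condition (ii$'$)/(ii$''$), the CS-compactness of the common $A\in\mathcal{S}(F)$ confining the relevant convex combinations), hence cluster at some $y\in F$. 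The (sequentially) closed graph then forces $y=v(x)$, while $y\in W$ since the images lie in the closed set $W$; thus $v(U)\subset W$ and $v$ is continuous. In the metrizable subcases one may alternatively package near-continuity together with the closed graph into continuity by an argument in the spirit of the Weston--Pettis theorem stated above, and note that a sequentially closed graph already suffices precisely because sequences determine continuity on the metrizable domain $G$. Reassembling the components through the first reduction then yields continuity of $u$ in each of the three items.
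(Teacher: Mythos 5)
A preliminary remark: this theorem is one of the results the paper explicitly states \emph{without proof}, giving only the references to Valdivia's book (p.~58, (11); p.~63, (17); p.~71, (15); p.~84, (13)), so your proposal cannot be matched against a proof in the paper itself; the fair benchmark is the cited proofs. Measured against those, your overall architecture is right, and two things deserve credit: the reduction through the universal property of the hull topology (with $\mathrm{Gr}(u\circ \varphi _{\alpha })=\left( \varphi _{\alpha }\times \mathrm{id}_{F}\right) ^{-1}\left( \mathrm{Gr}(u)\right) $, so the closed-graph hypothesis descends to each Baire factor) is correct and is exactly how such hull statements are reduced to the core case; and you correctly read the misprint in the third item, where ``$E$ is Suslin (resp.\ semi-Suslin)'' must mean $F$. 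The Baire-category climb along a Suslin scheme for $P$, producing a branch $\sigma $ with all $v^{-1}\left( A_{\sigma _{1},\dots ,\sigma _{k}}\right) $ non-meagre, is also the right skeleton, although your passage from ``closures with nonempty interior somewhere'' to near-continuity \emph{at the origin} silently needs the translation trick $\overline{D}-\overline{D}\subset \overline{D-D}$ together with a finite covering of the compact set $T(p_{0})$ by translates $y_{i}+W^{\prime }$; that omission is real but routine to repair.

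The genuine gap is in your closure-removal step. For $x\in U\subset \overline{v^{-1}(W)}$ you take an \emph{arbitrary} net in $v^{-1}(W)$ converging to $x$ and assert that, by semicontinuity of $T$, its images are confined near $T(p_{0})$ and hence cluster. That is a non sequitur: semicontinuity of $T$ constrains only points lying in the scheme sets $A_{\sigma _{1},\dots ,\sigma _{k}}$ (i.e.\ in $T(p)$ for $p$ close to $p_{0}$), whereas the members of your net lie merely in $W$, a closed neighborhood in a non-metrizable, generally non-complete space, where nothing forces a cluster point. The proofs you are sketching must instead \emph{interleave} the two stages: one reruns the category argument relative to $W$ to show that $x$ lies in $\overline{v^{-1}\left( A_{\sigma _{1},\dots ,\sigma _{k}}\cap (\text{small translate})\right) }$ for \emph{every} $k$ simultaneously, builds the approximating net so that $v(x_{k})\in A_{\sigma _{1},\dots ,\sigma _{k}}$, and only then invokes compactness of $T(p_{0})$ and the closed graph. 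Moreover, your claim that ``the same scheme applies'' in the semi-Suslin item cannot work even after this repair: condition (ii$^{\prime \prime }$) provides no cluster points at all, only a common CS-compact set, and there the upgrade is a Banach-style telescoping series $\sum_{k}\lambda _{k}v(x_{k})$ whose convergence is supplied by CS-compactness and whose sum is identified with $v(x)$ by the \emph{sequentially} closed graph --- a different mechanism, which is also the structural reason sequential closedness suffices exactly in the items with metrizable Baire (resp.\ convex-Baire) factors. So: correct reduction, correct skeleton for the $K$-Suslin case, but the key final step is unproved as written, and the ``single two-stage scheme'' claim overreaches for the semi-Suslin case.
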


Let us summarize these results in a table where the larger the row number,
the stronger the condition on $E$. \ This condition could hardly be weaker
than barrelledness since, as proved by Mahowald in 1961 (see \cite{KotheII},
p. 38), if every linear mapping of the LCS $E$ into an arbitrary Banach
space is continuous, then $E$ is barrelled.%
\begin{equation*}
\begin{tabular}{|l|l|l|l|}
\hline\hline
& $E$ & $F$ & $\limfunc{Gr}\left( u\right) $ \\ \hline\hline
(1) & barrelled & infra-Pt\'{a}k & closed \\ \hline\hline
(2) & l.c.h. Baire & $K$-Suslin & closed \\ \hline\hline
(3) & l.c.h. Baire & l.c.h. Pt\'{a}k & closed \\ \hline\hline
(4) & l.c.h. metrizable Baire & quasi-Suslin & closed \\ \hline\hline
(5) & l.c.h. metrizable Baire & Suslin & sq-closed \\ \hline\hline
(6) & l.c.h. metrizable convex-Baire & semi-Suslin & sq-closed \\ 
\hline\hline
(7) & ultrabornological & webbed & sq-closed \\ \hline\hline
(8) & ultrabornological & Suslin & sq-Borel \\ \hline
\end{tabular}%
\end{equation*}

The sq-closedness of $\limfunc{Gr}\left( u\right) $ is hardly a restriction
in practice. \ If only maps with sq-closed graph are considered, (4)$%
\Rightarrow $(5), (7)$\Rightarrow $(8).

\subsection{Open Mapping Theorems}

Let $u:F\rightarrow E$ be a surjective morphism. \ In the 3rd column of the
above table $F$ and $F/H$ $\left( H\subset F\right) $ have the same property
except for row (1) since a quotient of an infra-Pt\'{a}k space need not be
infra-Pt\'{a}k. \ Therefore, by Theorem \ref{Main thm}, if $E$ and $F$ are
as in this table, except in row (1), $u$ is open. \ Row (1) must be changed
to:

\begin{equation*}
\begin{tabular}{|l|l|l|}
\hline\hline
& $E$ & $F$ \\ \hline\hline
(1') & barrelled & Pt\'{a}k \\ \hline
\end{tabular}%
\end{equation*}%
which is Pt\'{a}k's result (\cite{Ptak}, p. 59, Thm. (4.10)).

\end{document}